\numberwithin{equation}{section}
\newtheorem{thm}{Theorem}[section]
\newtheorem{cor}[thm]{Corollary}
\newtheorem{lem}[thm]{Lemma}
\newtheorem{prop}[thm]{Proposition}
\newtheorem{rem}[thm]{Remark}
\theoremstyle{plain}
\theoremstyle{plain}
\theoremstyle{definition}
\theoremstyle{remark}
\numberwithin{theorem}{section}
\numberwithin{equation}{section}
\numberwithin{figure}{section}
\begin{document}
\title[Boundary regularity and non-transversal intersection]{Boundary regularity and non-transversal intersection for the fully nonlinear obstacle problem}

\author[Emanuel Indrei]{Emanuel Indrei}

% \thanks{$\dagger$\, Purdue University}

\def\signei{\bigskip\begin{center} {\sc Emanuel Indrei\par\vspace{3mm}Department of Mathematics\\  
Purdue University\\
West Lafayette, IN 47907, USA\\
email:} {\tt eindrei@purdue.edu}
\end{center}}

%\address{Department of Mathematics, The University of Texas, 1 University Station C1200, Austin, TX 78712}
%\keywords{ Non linear elliptic and parabolic equations, a priori estimates, maximum principle.
%\\
%\indent 2000 {\it Mathematics Subject Classification:} 35J15, 35J60, 35J62, 35J92, 35J93, 35J96, 35K10, 35K59, 35K92, 35K93, 35K96}

\makeatletter
\def\blfootnote{\xdef\@thefnmark{}\@footnotetext}
\makeatother

\date{}

\maketitle

\begin{abstract}
In this paper non-transversal intersection of the free and fixed boundary is shown to hold in any dimension for obstacle problems generated by fully nonlinear uniformly elliptic operators. Moreover, $C^1$ regularity results of the free boundary are obtained and a classification of blow-up solutions is given.
\end{abstract}

\section{Introduction} 

The dynamics of the free boundary are considered for strong $L^n$-solutions of the following PDE 

\begin{equation}  \label{eqF}
\begin{cases}
F(D^{2}u)=\chi_\Omega & \text{a.e. in }B_{1}^{+}\\
u=0 & \text{on }B'_{1}
\end{cases}
\end{equation}
\vskip .2in 
\noindent where $u \in W^{2,n}(B_{1}^{+})$, $F$ is a convex $C^1$ fully nonlinear uniformly elliptic operator, $\Omega$ is an open set and the free boundary is $\Gamma=\partial \Omega \cap B_1^+$. Under the structural assumptions on $F$, $u \in W^{2,p}(B_{1}^{+})$ for all $p<\infty$, and $u$ satisfies \eqref{eqF} in the viscosity sense \cite{MR1376656}. It was recently shown in \cite{MR3513142} that $u \in C^{1,1}(B_{1/2}^{+})$, see also \cite{MR3542613} for the interior case. The class of bounded solutions is denoted by $P_1^+(0,M, \Omega),$ where $||u||_{L^\infty(B_1^+)} \le M$. In what follows, the tangential touch problem is considered for $$\Omega = \big(\{u \ne 0\} \cup \{\nabla u \neq 0\} \big) \cap \{x_n>0\}\subset \mathbb{R}_+^n.$$  

It has been conjectured that the free boundary intersects the fixed boundary non-transversally and in two dimensions this was proved in \cite{MR3513142} (partial results have also been obtained in \cite{MR2065018}). The case of the Laplacian was treated in \cite{MR1950478, MR1359745}. In this paper, the following theorem is established. 

\begin{thm} (Non-transversal intersection, \S \ref{blowup}) \label{tt}
There exists $r_0>0$ and a modulus of continuity $\omega$ such that 
$$\Gamma(u) \cap B_{r_0}^+ \subset \{x: x_n \le \omega(|x'|)|x'|\}$$ for all $u \in P_1^+(0,M, \Omega)$ provided $0 \in \overline{\Gamma(u)}$.  
\end{thm}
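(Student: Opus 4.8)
The plan is to argue by contradiction and reduce the statement to the classification of blow-up solutions at the fixed boundary carried out in \S\ref{blowup}. Since $|x|\le|x'|\sqrt{1+(x_n/|x'|)^2}$ whenever $x\in\Gamma(u)$, the existence of the claimed modulus $\omega$ is equivalent to $\Theta(\rho):=\sup\{x_n/|x'|:\ u\in P_1^+(0,M,\Omega),\ 0\in\overline{\Gamma(u)},\ x\in\Gamma(u)\cap B_\rho^+\}\to0$ as $\rho\to0^+$. If this were false there would exist $\eps_0>0$, solutions $u_j\in P_1^+(0,M,\Omega_j)$ with $0\in\overline{\Gamma(u_j)}$, and free boundary points $x_j\in\Gamma(u_j)$ with $x_j\to0$ and $(x_j)_n\ge\eps_0|x_j'|$. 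Because $u_j=0$ on $B_1'$ and $\nabla u_j$ is continuous with $\nabla u_j\equiv0$ on $\Gamma(u_j)$, the hypothesis $0\in\overline{\Gamma(u_j)}$ forces $u_j(0)=0$ and $\nabla u_j(0)=0$. I would then set $d_j:=|x_j|\to0$ and rescale $v_j(y):=d_j^{-2}u_j(d_jy)$: the uniform $C^{1,1}$ estimate of \cite{MR3513142} gives $|u_j(x)|\le C_0|x|^2$ on $B_{1/2}^+$ and uniform $C^{1,1}_{\loc}(\overline{\R^n_+})$ bounds for the $v_j$, so along a subsequence $v_j\to v_\infty$ in $C^1_{\loc}(\overline{\R^n_+})$, with $v_\infty$ a global solution of $F(D^2v_\infty)=\chi_{\Omega_\infty}$ in $\R^n_+$, $\Omega_\infty=\{v_\infty\ne0\}\cup\{\nabla v_\infty\ne0\}$, $v_\infty\equiv0$ on $\{x_n=0\}$, $|v_\infty(x)|\le C_0|x|^2$, and $\nabla v_\infty(0)=0$.

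The next step is to transfer the free boundary information to $v_\infty$ by means of a nondegeneracy estimate that is uniform up to $\{x_n=0\}$. Since $v_j(\bar x_j)=\nabla v_j(\bar x_j)=0$ for $\bar x_j:=x_j/d_j$, with $|\bar x_j|=1$ and $(\bar x_j)_n\ge\delta_0:=\eps_0(1+\eps_0^2)^{-1/2}$, passing the nondegeneracy at $\bar x_j$ to the limit yields $\bar x_\infty:=\lim_j\bar x_j\in\partial\Omega_\infty\cap\{x_n>0\}=\Gamma(v_\infty)$, with $(\bar x_\infty)_n\ge\delta_0>0$; in particular $\Gamma(v_\infty)\ne\emptyset$ and $v_\infty\not\equiv0$. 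Choosing, for each $j$, a point $\zeta_j\in\Gamma(u_j)$ with $|\zeta_j|<d_j/j$ (possible since $0\in\overline{\Gamma(u_j)}$) gives $\zeta_j/d_j\in\Gamma(v_j)\to0$, and the same nondegeneracy shows $v_\infty$ does not vanish identically on any half-ball $B_\rho^+$, so $0\in\partial\Omega_\infty$ is a free boundary point lying on the fixed boundary. By the classification of \S\ref{blowup}, a global solution $v_\infty$ of this type — a blow-up limit at the fixed boundary point $0$, vanishing on $\{x_n=0\}$, nondegenerate at $0$, with $\nabla v_\infty(0)=0$ and quadratic growth — must be a half-space solution or a homogeneous quadratic polynomial solution; vanishing on $\{x_n=0\}$ forces the half-space to be $\{x_n>0\}$, so either $v_\infty=\tfrac{\gamma}{2}x_n^2$ with $F(\gamma\,e_n\otimes e_n)=1$, or $v_\infty(x)=x_n(b\cdot x)$ with constant Hessian, in which case $F(D^2v_\infty)$ is a constant taking the value $\chi_{\Omega_\infty}\in\{0,1\}$, so $\Omega_\infty\in\{\emptyset,\R^n_+\}$. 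In every case $\Gamma(v_\infty)=\partial\Omega_\infty\cap\{x_n>0\}=\emptyset$, contradicting $\bar x_\infty\in\Gamma(v_\infty)$. This contradiction proves the theorem.

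The hard part is the classification itself in dimension $n\ge3$: in two dimensions \cite{MR3513142} exploits that the free boundary is a curve, and no such reduction survives in higher dimensions. Two ingredients carry the weight. The first is a nondegeneracy estimate holding uniformly \emph{up to} the fixed boundary $\{x_n=0\}$ — this is what legitimizes $\bar x_\infty\in\Gamma(v_\infty)$ and $0\in\partial\Omega_\infty$ — and it must be extracted from the equation together with barrier/comparison arguments near $\{x_n=0\}$. The second, and the genuinely delicate one, is the rigidity of blow-ups at fixed boundary points: since $F$ is only convex and $C^1$ with no divergence structure, a Weiss-type energy is not available off the shelf, so one cannot simply invoke $2$-homogeneity. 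The natural substitute is to analyze the tangential difference quotients $\tau_{h,e}v_\infty$ with $e\perp e_n$: in the non-coincidence set they solve a linear uniformly elliptic equation, so in the limit $\partial_e v_\infty$ solves the Pucci extremal inequalities there; combining this with the vanishing of $\partial_e v_\infty$ on $\{x_n=0\}$ and on the coincidence set, the bound $|\partial_e v_\infty(x)|\le C_0|x|$, an odd reflection across $\{x_n=0\}$ (under which the Pucci operators are invariant), and a Liouville-type rigidity, one forces $v_\infty$ to depend on $x_n$ alone, after which an elementary ODE analysis identifies it. Where I expect the real work to lie is in controlling $\partial_e v_\infty$ across the free boundary $\Gamma(v_\infty)$, where a priori only one-sided extremal inequalities are at hand; alternatively, if a Weiss- or Monneau-type monotonicity formula can be set up for this $F$, it directly gives the $2$-homogeneity of $v_\infty$ and hence, via $v_\infty=x_n\,\ell(x)$ with $\ell$ affine, the absence of an interior free boundary.
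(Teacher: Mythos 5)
Your compactness-and-contradiction skeleton (rescale at $d_j=|x_j|$, pass to a global limit $v_\infty$, carry the cone-point to a free boundary point $\bar x_\infty\in\Gamma(v_\infty)$ with $(\bar x_\infty)_n\ge\delta_0$) coincides with the first half of the paper's argument, but the step that carries all the weight --- ``by the classification of \S\ref{blowup}, $v_\infty$ must be a half-space solution or a quadratic polynomial, hence $\Gamma(v_\infty)=\emptyset$'' --- is not available as a black box, and this is a genuine gap. What \S\ref{blowup} actually provides is a dichotomy (Proposition \ref{th1}): either \emph{all} blow-up limits of the sequence are of the form $bx_n^2$, or there \emph{exists some} blow-up limit, along some unrelated subsequence and scales $r_j$, of the form $ax_1x_n+bx_n^2$ with $a\neq0$. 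The second alternative says nothing about your particular limit $v_\infty$ taken at the scales $d_j$, so you cannot conclude it lies in the classified list; and Theorem \ref{cul}, which does classify blow-ups, concerns a single solution $u$ and is itself deduced from Proposition \ref{ke}, i.e.\ it sits downstream of the very machinery you would need. The paper avoids identifying $v_\infty$ in this branch altogether: Proposition \ref{ke} (directional monotonicity from Lemma \ref{m}, the cylinder/Hopf-lemma argument showing $\Gamma_i(\tilde u_j)\cap B_{R/2}^+=\emptyset$, nondegeneracy, and boundary Evans--Krylov) upgrades the solutions $u_{k_j}$ to $C^{2,\alpha}$ in half-balls $B_{r_j/2}^+$, and then $0\in\overline{\Gamma(u_{k_j})}$ places a free boundary point inside such a half-ball, contradicting the continuity of $F(D^2u_{k_j})$ there. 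Your proposed substitutes for the classification --- odd reflection of $\partial_e v_\infty$ plus a Liouville theorem, or a Weiss/Monneau monotonicity formula --- are precisely the points you flag as unresolved (control of $\partial_e v_\infty$ across $\Gamma(v_\infty)$ where only one-sided extremal inequalities hold; no Weiss-type formula for a general convex $C^1$ uniformly elliptic $F$), so the central step of the proof is missing rather than merely routed differently.

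Two smaller remarks. First, the uniform-up-to-the-boundary nondegeneracy you invoke to promote $\bar x_\infty$ and $0$ to free boundary points of $v_\infty$ is asserted, not proved; the paper also relies on nondegeneracy (in Proposition \ref{ke}), so this is a lesser issue, but in your write-up it is another unproved ingredient. Second, in the branch where all blow-ups are half-space solutions your argument does agree with the paper's: $v_\infty=bx_n^2$ has empty free boundary in $\{x_n>0\}$, which contradicts $\bar x_\infty\in\Gamma(v_\infty)$; it is only the complementary branch where your proposal has no complete argument.
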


As a consequence, if the solution is non-negative, then the free boundary is $C^1$ in a neighborhood of the origin.

\begin{thm} (Regularity, \S \ref{regularity}) \label{c1r}
Let $u \in P_1^+(0, M, \Omega)$ be non-negative and $0 \in \overline{\Gamma(u)}$. There exists $r_0>0$ such that $\Gamma$ is the graph of a $C^1$ function in $B_{r_0}^+$.
\end{thm}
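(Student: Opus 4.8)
The plan is to derive Theorem~\ref{c1r} from the non-transversal intersection estimate of Theorem~\ref{tt} together with the known interior regularity theory for the fully nonlinear obstacle problem. The starting point is that Theorem~\ref{tt} already confines the free boundary near the origin to the cuspidal region $\{x_n \le \omega(|x'|)|x'|\}$; since $\omega$ is a modulus of continuity, this says precisely that $\Gamma$ touches the fixed boundary $B_1'$ tangentially. The task is to upgrade this one-sided containment to the statement that $\Gamma \cap B_{r_0}^+$ is an honest $C^1$ graph.

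First I would record the structure of blow-ups at free boundary points in $B_1^+$. By the $C^{1,1}$ regularity from \cite{MR3513142} and the classification of blow-up solutions announced in the abstract (and presumably proved in \S\ref{blowup}), at every point of $\Gamma$ the rescalings $u_r(x) = u(x_0 + rx)/r^2$ converge along subsequences to a global solution which, because $u \ge 0$, must be a half-space solution of the form $\tfrac{1}{2}(x\cdot e)_+^2$ composed with the relevant affine correction for $F$; in particular, interior free boundary points in $B_1^+$ satisfy the standard non-degeneracy and the free boundary is $C^{1,\alpha}$ there by the fully nonlinear analogue of Caffarelli's argument. The point of assuming $u \ge 0$ is twofold: it rules out the two-phase/branch points and it forces the coincidence set $\{u = 0\}$ to have positive density at free boundary points, which is what powers the flatness-implies-$C^{1,\alpha}$ dichotomy.

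Next I would handle the boundary itself. Fix $r_0$ small, from Theorem~\ref{tt}, so that $\Gamma \cap B_{r_0}^+ \subset \{x_n \le \omega(|x'|)|x'|\}$ with $\omega(0^+) = 0$. For a point $x_0 \in \Gamma \cap B_{r_0/2}^+$ that lies at distance $\delta = x_{0,n}$ from $B_1'$, interior regularity gives a $C^{1,\alpha}$ description of $\Gamma$ in $B_{c\delta}(x_0)$ with estimates depending on $\delta$; the cuspidal containment forces the normal to $\Gamma$ at such points to be almost vertical, with a quantitative bound in terms of $\omega(|x_0'|)$. Letting $\delta \to 0$, one gets that $\Gamma$ extends continuously up to $B_1' \cap B_{r_0}^+$ and that along any sequence of free boundary points approaching the fixed boundary the unit normals converge to $e_n$. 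Combining the uniform interior $C^{1,\alpha}$ bounds in shrinking balls with this vertical-normal control at the boundary, a covering/compactness argument yields that $\Gamma \cap B_{r_0}^+$ is the graph $x_n = g(x')$ of a function $g$ which is $C^{1,\alpha}$ in the open half and whose gradient tends to $0$ as $|x'| \to 0$; hence $g \in C^1$ up to $x' = 0$ after possibly shrinking $r_0$ once more.

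The main obstacle I expect is the last step: patching the purely interior $C^{1,\alpha}$ estimates, whose constants blow up as one approaches $B_1'$, into a single $C^1$ statement holding uniformly up to the fixed boundary. This is exactly where Theorem~\ref{tt} must be used not just qualitatively but quantitatively — the modulus $\omega$ has to dominate the rate at which the interior estimates degrade, so that the oscillation of the normal to $\Gamma$ over $B_{r_0}^+$ is genuinely controlled by a single modulus of continuity. One must also verify that $\Gamma$ cannot develop vertical pieces or fail to be a graph near the origin; here non-degeneracy of $u$ from below (again using $u \ge 0$ and the obstacle equation $F(D^2 u) = 1$ on $\{u > 0\}$) rules out the coincidence set pinching off, so that $\{u = 0\} \cap B_{r_0}^+$ sits on one definite side of the graph. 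Once these are in place, the conclusion that $\Gamma$ is a $C^1$ graph in $B_{r_0}^+$ follows.
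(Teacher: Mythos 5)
Your proposal has a genuine gap: it tries to get by with Theorem~\ref{tt} plus interior regularity theory, and the bridge you invoke --- that $u\ge 0$ ``forces the coincidence set $\{u=0\}$ to have positive density at free boundary points,'' so that every free boundary point near the origin is a regular point where the interior $C^{1,\alpha}$ theory applies --- is false. Non-negative solutions of obstacle problems can have singular free boundary points at which the coincidence set has zero density (this is exactly why Corollary~\ref{reg} carries a thickness hypothesis $\delta_r(u,x^0)\ge\epsilon_0$, and why the paper stresses that Theorem~\ref{c1r} has \emph{no} density assumption). Likewise, your claim that blow-ups at interior points must be half-space profiles because $u\ge 0$ ignores the polynomial (singular) blow-ups, so nothing in your argument excludes singular points at small positive distance from the fixed boundary inside $B_{r_0}^+$. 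A second, related problem: Theorem~\ref{tt} gives only a one-sided containment $\Gamma\subset\{x_n\le\omega(|x'|)|x'|\}$, and this does not by itself control the normal to $\Gamma$ at a point at height $\delta>0$, so the step ``the cuspidal containment forces the normal to be almost vertical, quantitatively in terms of $\omega(|x_0'|)$'' is unjustified; the free boundary could a priori oscillate wildly inside the cusp region.

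The paper closes both gaps with Lemma~\ref{rl}, which is the ingredient your outline is missing: for non-negative solutions, at every free boundary point $x^0$ with $d=x_n^0$ small one has $\sup_{B_{2d}^+(x^0)}|u-h|\le\epsilon d^2$ and $\sup_{B_{2d}^+(x^0)}|\nabla u-\nabla h|\le\epsilon d$ with $h(x)=b[(x_n-d)^+]^2$. This is proved by compactness together with the directional-monotonicity machinery of \S\ref{blowup}: if the rescalings did not converge to the half-space profile, the quantity $N$ would be positive and one would produce a blow-up limit of the form $ax_1x_n+cx_n+\tilde b x_n^2$ with $a\neq 0$, contradicting $u\ge 0$. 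It is this uniform closeness to a half-space solution near the fixed boundary --- not non-negativity alone --- that yields the two-cone (Lipschitz) property at all free boundary points near the origin, hence the thickness condition needed to invoke the interior $C^1$ results away from the origin, while Theorem~\ref{tt} is used only at the very end to let the cone aperture tend to $\pi$ and conclude that the normal converges to $e_n$ at the origin. To repair your argument you would need to prove a statement of the strength of Lemma~\ref{rl} (or otherwise exclude singular points uniformly near the fixed boundary); as written, the proposal does not.
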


Moreover, the methods developed to prove the theorems lead to a classification of blow-up limits 
$$\lim_{r\rightarrow 0^+} \frac{u(rx)}{r^2}$$
which in the interior case was carried out in \cite{MR1745013}. 

\begin{thm} (Uniqueness of Blow-Ups, \S \ref{blowup}) \label{cul}
Suppose $u \in P_1^+(0,M, \Omega)$. If $0 \in \overline{\{u \neq 0\}}$ and $\nabla u(0)=0$, then the blow-up limit of $u$ at the origin has the form $$u_0(x)=ax_1x_n+bx_n^2$$ for $a, b \in \mathbb{R}$.
\end{thm}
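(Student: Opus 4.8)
The plan is a blow-up/compactness argument: I identify every subsequential limit of the rescalings as a global solution of the half-space problem, classify such global solutions, and then read off the form of the blow-up from that classification.

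\emph{Step 1 (compactness).} Since $0$ lies on the flat boundary, $u(0)=0$, and together with $\nabla u(0)=0$ and the $C^{1,1}(B_{1/2}^+)$ estimate of \cite{MR3513142} this gives $\|u\|_{L^\infty(B_r^+)}\le Cr^2$ for small $r$, with $C=C(n,M,\lambda,\Lambda)$. Hence $u_r(x):=u(rx)/r^2$ is bounded in $C^{1,\alpha}_{\mathrm{loc}}(\overline{\R^n_+})$ for every $\alpha<1$, so along a sequence $r_k\to0^+$ we get $u_{r_k}\to u_0$. Because $D^2u_r(x)=(D^2u)(rx)$, the equation is scale invariant, $F(D^2u_r)=\chi_{\Omega/r}$, so in the limit (using stability of viscosity solutions together with the usual control on the coincidence sets) $u_0=0$ on $\{x_n=0\}$, $u_0\in C^{1,1}_{\mathrm{loc}}(\overline{\R^n_+})$ with $\|D^2u_0\|_\infty\le C$, $u_0(0)=\nabla u_0(0)=0$, $|u_0(x)|\le C|x|^2$, and $F(D^2u_0)=\chi_{\Omega_0}$ a.e.\ with $\Omega_0=\{u_0\ne0\}\cup\{\nabla u_0\ne0\}$ (in particular $0\le F(D^2u_0)\le1$ and $F(D^2u_0)=1$ a.e.\ in $\Omega_0$). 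If $u_0\equiv0$ the conclusion already holds with $a=b=0$, so from now on assume $u_0\not\equiv0$.

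\emph{Step 2 (classification of global solutions, the crux).} On the open set $\Omega_0$ the equation $F(D^2u_0)=1$ and the convexity of $F$ give, by Evans--Krylov, $u_0\in C^{2,\alpha}_{\mathrm{loc}}(\Omega_0)$; since this equation is itself scale invariant and $u_0$ grows quadratically, $D^2u_0$ obeys the \emph{scale-invariant} interior bound $[D^2u_0]_{C^\alpha(B_{\rho/2})}\le C\rho^{-\alpha}$ on interior balls of $\Omega_0$. It therefore suffices to prove that $\Omega_0=\R^n_+$ (no interior coincidence set): then $D^2u_0$ is constant (let $\rho\to\infty$), $u_0$ is a quadratic polynomial, and vanishing on $\{x_n=0\}$ together with $u_0(0)=\nabla u_0(0)=0$ forces, after a rotation of the $x'$ variables, $u_0=ax_1x_n+bx_n^2$. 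Excluding an interior coincidence set is where I would use the boundary condition: for $i<n$ the tangential derivative $v_i:=\partial_iu_0$ vanishes on $\{x_n=0\}$ and on $\mathrm{int}\,\Omega_0^c$, grows at most linearly, and solves the linearized uniformly elliptic (continuous-coefficient) equation $F_{kl}(D^2u_0)\partial_{kl}v_i=0$ in $\Omega_0$, while convexity of $F$ makes each $\partial_{ee}u_0$ a bounded supersolution of $\mathcal M^-_{\lambda,\Lambda}=0$ in $\Omega_0$. I would feed these facts into a monotonicity formula — a Weiss/ACF-type quantity for the linearized problem — to conclude that $u_0$ is $2$-homogeneous, and then classify the $2$-homogeneous global solutions vanishing on $\{x_n=0\}$ with quadratic growth, showing the coincidence cone must be empty; this is the step where no off-the-shelf tool exists for genuinely fully nonlinear $F$.

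\emph{Step 3 (conclusion).} Applying Step 2 to the blow-up $u_0$ from Step 1 yields $u_0(x)=ax_1x_n+bx_n^2$, which is the claimed form; the interior analogue of this classification is \cite{MR1745013}. One then checks that $(a,b)$ is independent of the subsequence $r_k$ — so the blow-up is in fact unique — either through the limiting value of the monotonicity quantity or by reading $a$ and $b$ off from $F$ and from the limits of $\partial_{1n}u_r$, $\partial_{nn}u_r$. The main obstacle is Step 2: proving $2$-homogeneity and ruling out pathological coincidence sets for a fully nonlinear operator is the real work, and it must combine the scale-invariant Evans--Krylov estimate, the linear equation for the tangential derivatives, the Cauchy data $u_0|_{\{x_n=0\}}=0$, and the convexity of $F$; everything else is routine compactness.
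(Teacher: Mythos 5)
Your Step 2 is exactly the point at which the argument is missing, and you say so yourself: you reduce everything to showing that the blow-up has empty interior coincidence set (equivalently, classifying global half-space solutions), and propose to do this by a ``Weiss/ACF-type quantity for the linearized problem'' to obtain $2$-homogeneity. No such monotonicity formula is available for a general convex fully nonlinear $F$ --- these formulas rely on the divergence/variational structure of the Laplacian --- so this step is not a routine citation but the entire content of the theorem, and as written your proposal does not prove it. There is also a smaller technical slip even granting $\Omega_0=\R^n_+$: interior Evans--Krylov on balls $B_\rho(x)\subset\R^n_+$ only gives $[D^2u_0]_{C^\alpha(B_{\rho/2}(x))}\le C\rho^{-\alpha}$ with $\rho$ limited by $x_n$, so you cannot ``let $\rho\to\infty$'' near the boundary; one needs the up-to-the-boundary Evans--Krylov estimate on half-balls $B_R^+$ (with the zero Dirichlet data) before sending $R\to\infty$ to conclude $D^2u_0$ is constant.

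The paper avoids the global classification you are attempting. It first proves a dichotomy (Proposition \ref{th1}): either every blow-up is a half-space solution $bx_n^2$, or some blow-up already has the form $ax_1x_n+bx_n^2$ with $a\neq 0$, obtained by tracking the maximal tangential slope $N=\limsup \tfrac{1}{x_n}\partial_e u$. In the second case it does not study the limit further; instead it transfers information back to the rescalings $\tilde u_j$ before the limit: the directional monotonicity $C_0\partial_{x_1}u_0-u_0\ge 0$ of the limit becomes, via the improvement-of-monotonicity Lemma \ref{m}, exact monotonicity $C_0\partial_{x_1}\tilde u_j-\tilde u_j\ge0$ for large $j$, and a cylinder-sliding argument combined with Hopf's lemma and non-degeneracy shows $\Gamma_i(\tilde u_j)\cap B_{R/2}^+=\emptyset$, so $F(D^2\tilde u_j)=1$ in $B_{R/2}^+$ and boundary Evans--Krylov makes $u$ itself $C^{2,\alpha}$ near the origin (Proposition \ref{ke}). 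Then $D^2u(0)$ exists, every blow-up equals $\langle x,D^2u(0)x\rangle$, and the boundary condition $u_0(x',0)=0$ forces the form $ax_1x_n+bx_n^2$ after a rotation --- which also gives the subsequence-independence that your Step 3 leaves vague. If you want to complete your route, you would have to replace the hoped-for monotonicity formula by an argument of this directional-monotonicity/Hopf type.
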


A similar regularity result holds also in the two-phase case provided that the coincidence set satisfies a certain density assumption: given a set $E$, let $MD(E)$ denote the smallest distance between two parallel hyperplanes containing $E$. To measure thickness of the coincidence set corresponding to a solution $u \in P_1^+(0,M, \Omega)$, let 
$$\delta_r(u,x)=\frac{MD\big((B_1^+ \setminus \Omega)\cap B_r(x)\big)}{r}.$$ If there exists $\epsilon_0>0$ such that $$\delta_r(u,x^0) \ge \epsilon_0$$ for all $r>0$ and $x^0 \in \Gamma \cap B_1^+$, then the free boundary is $C^1$, see Corollary \ref{reg}. In particular, Lipschitz free boundaries are $C^1$ and this is optimal in the sense that in general the free boundary is not $C^{1, \text{Dini}}$ \cite{MR1392033, MR1698524, MR1950478}.

The regularity of the free boundary in the classical obstacle problem has been an area of intense research. If the solution is non-negative, it was shown in \cite{MR0454350} that under a density condition on the coincidence set, the free boundary is $C^1$ and higher regularity follows from \cite{MR0440187}; in general, there exist singularities \cite{MR0516201} and the structure of the singular set appeared in \cite{MR1658612}. Note that there is no density assumption in Theorem \ref{c1r}. It is of interest to understand the most general conditions on the operator, $\Omega$, and boundary data to which this extends.

In \cite{MR3357696, MR3198649}, the authors employed a novel method to handle the fully nonlinear uniformly elliptic case based on a harmonic analysis technique which appeared in \cite{MR2999297} where the authors proved optimal regularity for the solution of the no-sign obstacle problem under the weakest possible assumptions. This was further developed in \cite{MR3542613, MR3625855} to obtain sharp regularity results for more general equations and also improve on existing results for the classical semilinear equation.

The analysis is developed in such a way as to consider the most general configurations. In the above description, solutions with zero Dirichlet boundary data on the hyperplane were considered. If this is not the case, the free boundary may approach the fixed boundary at an angle \cite{MR2281197}.  Several variations of the classical tangential touch problem in recent years have appeared in \cite{MR2874960, MR2727672, MR2267752, MR2237208, MR2142064}. It is of interest to study the largest function space for which uniform results hold.  

The $C^1$ regularity proved in this paper for the case when $u \ge 0$ is natural when considering the historical aspect of the problem where the solution represents the pressure in a liquid: consider water which penetrates a porous medium and divides it into a wet and dry part separated by an interface. The geometry of this interface subject to various boundary conditions was studied in \cite{MR693780}. However, mathematically, the no-sign case is more delicate since one has to understand different phases. For instance, blow-up sequences producing limits of the form $ax_1x_n+bx_n^2$ can be excluded in the one phase case.         

The idea of the proof for tangential touch is to understand the configuration of a blow-up solution which is not a half-space solution and connect it with the interior of the $0$-level set of $u$: if $\partial (\text{int} \{u=0\})$ intersects any ball around the origin, then all blow-up solutions must be half-space solutions. In \cite{MR3513142}, the dimensional constraint is a necessary component of the proof since it relies on the fact that if $$u_j \rightarrow u_0=ax_1x_2+bx_2^2,$$ as $j \rightarrow \infty$, then $$|\nabla u_j|>0$$ in $B_r^+ \setminus B_\delta^+$ for $\delta<r$ and $j$ large; in higher dimensions this is not the case: e.g. consider for $t>0$, $z_t=(0,t,0)$ so that $$\nabla u_0(z_t)=0.$$ 

In the subsequent pages, the following idea is developed to circumvent this difficulty: given any $r>0$, take any cylinder oriented in the $x_1$-direction in $B_r^+$. Then there exist points in the cylinder whose $x_1$-coordinate is less than $-r/2$, such that $$|\nabla u_j(x)| \ge c$$ for $j$ large, where the constant is independent of $j$ and of the cylinder. As a result, if there exist elements of $\partial (\text{int} \{u=0\})$ inside $B_r^+$, then one can prove a monotonicity property to obtain information about the growth of the function in the $x_1$-direction inside this cylinder:
suppose there exist non-negative constants $\epsilon, C$ such that 
$$C \partial_e u -u \ge -\epsilon$$ in $B_r^+$, then $$C \partial_e u - u \ge 0$$ in $B_{\frac{r}{2}}^+$, provided $\epsilon$ is small enough (see Lemma \ref{m}). 

The analysis results in the following statement: either re-scalings converge to half-space solutions, or a certain regularity property holds, and this leads to the classification of blow-up limits. For the regularity, the idea is to show that if the solution is non-negative and there is contact between the fixed and free boundary, then the solution is close to a half-space solution. Thereafter, it follows that it is Lipschitz away from the origin, and hence $C^1$ by interior results and since tangential touch holds, the free normal converges to $e_n$.

\section{Preliminaries} \label{pre}
In what follows, $F$ is assumed to satisfy the following structural conditions.
\begin{itemize}
\item $F(0)=0$.
\item $F$ is uniformly elliptic with ellipticity constants $\lambda_{0}$, $\lambda_{1}>0$
such that
$$
\mathcal{P}^{-}(M-N)\le F(M)-F(N)\le\mathcal{P}^{+}(M-N),
$$
where $M$ and $N$ are symmetric matrices and $\mathcal{P}^{\pm}$
are the Pucci operators
$$
\mathcal{P}^{-}(M):=\inf_{\lambda_{0} \le N\le\lambda_{1}} \text{tr}(NM),\qquad\mathcal{P}^{+}(M):=\sup_{\lambda_{0}\le N\le\lambda_{1}}\text{tr} (NM).
$$
\item $F$ is convex and $C^1$.
\end{itemize}

Let $\Omega$ be an open set. A continuous function $u$ belongs to $P_r^+(0,M, \Omega)$ if $u$ satisfies in the viscosity sense:\\

\noindent 1. $F(D^2 u)=\chi_\Omega$ a.e. in $B_r^+$;\\
2. $||u||_{L^\infty(B_r^+)} \le M$;\\
3. $u=0$ on $\{x_n=0\} \cap \overline{B_1^+}=:B'_{1}$.\\

In \cite{MR3513142} it was shown that $W^{2,p}$ solutions are $C^{1,1}$. 
Furthermore, given $u \in P_r^+(0,M, \Omega)$, the free boundary is denoted by 
$
\Gamma=\partial \Omega \cap B_r^+
$ 
and  
$
\Gamma_i=\partial (\text{int} \{u=0\}) \cap B_r^+.
$
A cylinder with respect to the $e_1$-axis is denoted by $$S_{(\alpha, \beta)}(e_1)=\{(x_1,x'',x_n): (x_n-\beta)^2+|x''|^2<\alpha^2\}.$$

\section{Non-transversal intersection and blow-ups}
In this section, minimal assumptions are made on the set $\Omega$ in order to allow for general configurations.
\subsection{Technical tools} 
The following lemma is similar to the interior case \cite{MR1745013, MR3198649} and provides an improvement of monotonicity.   
\begin{lem} \label{m}
Let $u \in P_r^+(0,M, \Omega)$ where $\{u \neq 0\} \subset \Omega$. Let $e \in \mathbb{S}^{n-2} \cap e_n^{\perp}$ and suppose there exist non-negative constants $\epsilon_0, C_0$ such that $C_0 \partial_e u -u \ge -\epsilon_0$ in $B_r^+$. Then there exists $c=c(n, \Lambda, r)>0$ such that if $\epsilon_0 \le c$, then $C_0 \partial_e u - u \ge 0$ in $B_{\frac{r}{2}}^+$. 
\end{lem}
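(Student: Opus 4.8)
The statement is a standard "improvement of monotonicity" in the spirit of the classical obstacle problem, and the natural route is a barrier/maximum-principle argument combined with the equation satisfied by directional derivatives. Set $w = C_0 \partial_e u - u$. Since $e \in \mathbb{S}^{n-2}\cap e_n^\perp$ is a tangential direction, the hyperplane $\{x_n=0\}$ is invariant under translations in the $e$-direction, so $\partial_e u = 0$ on $B'_r$, and therefore $w = 0$ on $B'_r$ as well; this is the key feature that lets one work in the half-ball as if it were an interior problem (after even reflection, or directly via the boundary condition). The plan is to show that the set $\{w < 0\}\cap B_{r/2}^+$ is empty once $\epsilon_0$ is small.

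First I would record the PDE for $w$. Differentiating $F(D^2 u) = \chi_\Omega$ in the direction $e$ formally gives $F_{ij}(D^2u)\,\partial_{ij}(\partial_e u) = 0$ in the open set $\Omega$ (the right-hand side is locally constant there), while $F_{ij}(D^2u)\,\partial_{ij} u$ is comparable to $1$ on $\Omega$ and vanishes on $\mathrm{int}\{u=0\}$; since $\{u\neq 0\}\subset\Omega$, on the complement of $\Omega$ we have $u\equiv 0$ and $\nabla u \equiv 0$, hence $w\equiv 0$ there. So on $\Omega\cap B_r^+$ the function $w$ satisfies a linear uniformly elliptic equation $L w := a_{ij}\partial_{ij} w = f$ with $\Lambda$-bounded measurable coefficients $a_{ij} = F_{ij}(D^2u)$ and $f = -F_{ij}(D^2u)\,\partial_{ij} u = -1 \le 0$ a.e. on $\Omega$ (and $w=0$ off $\Omega$). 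The upshot is that $w$ is an $L^n$-viscosity (equivalently strong $W^{2,n}$) supersolution of a uniformly elliptic equation in all of $B_r^+$, vanishing on $B'_r$. The convexity and $C^1$ hypotheses on $F$, together with the $C^{1,1}$ regularity of $u$ from \cite{MR3513142}, are what make this differentiation legitimate in the strong sense.

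Next I would run the barrier argument. Let $A = \{w < 0\}\cap B_r^+$; this open set is contained in $\Omega$ and avoids $B'_r$. On $A$, $Lw = -1$ (up to an ellipticity-dependent constant), i.e. $Lw \le -\lambda$ for a fixed $\lambda = \lambda(n,\Lambda) > 0$, and $w \ge -\epsilon_0$ everywhere with $w = 0$ on $\partial A \cap (\partial\{w<0\})$. Compare $w$ on $A$ with the explicit quadratic barrier $\phi(x) = -\epsilon_0 + \mu\bigl((r/2)^2 - |x - x_0|^2\bigr)$ — or more simply with a suitable multiple of the torsion function of $B_{r/2}^+$ — centered appropriately; choosing $\mu = \mu(n,\Lambda, r)$ so that $L\phi \ge -\lambda/2 > Lw$ on $A\cap B_{r/2}^+$ while $\phi \le w$ on the parabolic-type boundary. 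The Alexandrov–Bakelman–Pucci estimate (or the comparison principle for strong solutions, \cite{MR1376656}) then forces $w \ge \phi$ on $A\cap B_{r/2}^+$, and since $\phi(x) = -\epsilon_0 + (\text{something bounded below by } c_0(n,\Lambda,r) > 0)$ at points well inside $B_{r/2}^+$, we conclude $w \ge c_0 - \epsilon_0 > 0$ there provided $\epsilon_0 \le c := c_0$. Points of $B_{r/2}^+$ near its own boundary but still interior to $B_r^+$ are handled by re-centering the barrier, using that $w$ vanishes on $B'_r$ to control the near-$\{x_n=0\}$ part and that $w = 0$ on $\partial A$ to control the rest; a covering of $B_{r/2}^+$ by such barrier domains finishes the argument. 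Hence $w \ge 0$ on $B_{r/2}^+$.

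The main obstacle I anticipate is purely technical rather than conceptual: justifying that $w$ is a genuine (strong or $L^n$-viscosity) supersolution across the free boundary $\Gamma$, where $D^2 u$ is only $L^\infty$ and $\partial_e u$ is merely Lipschitz, so that one cannot naively differentiate the fully nonlinear equation. The clean way around this is to avoid differentiating pointwise: instead, for $h>0$ small, consider the difference quotient $u_h(x) = \bigl(u(x + he) - u(x)\bigr)/h$, note $u_h = 0$ on $B'_r$, and observe that $F(D^2 u(x+he)) - F(D^2 u(x)) = \chi_\Omega(x+he) - \chi_\Omega(x)$, which by uniform ellipticity gives $\mathcal{P}^-(D^2 u_h) \le (\chi_\Omega(\cdot+he)-\chi_\Omega)/h \le \mathcal{P}^+(D^2 u_h)$; then $C_0 u_h - u$ satisfies, off a set where everything vanishes, a Pucci differential inequality with nonpositive right-hand side, and one applies the ABP/comparison machinery uniformly in $h$ before letting $h \to 0$. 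The constants produced this way are manifestly of the form $c(n,\Lambda,r)$, matching the statement. Everything else — the explicit barrier, the covering of $B_{r/2}^+$, the boundary vanishing — is routine.
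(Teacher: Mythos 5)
Your linearization step is essentially the paper's: convexity of $F$ gives measurable uniformly elliptic coefficients $a_{ij}=F_{ij}(D^2u)$ with $a_{ij}\partial_{ij}\partial_e u\le 0$ and $a_{ij}\partial_{ij}u\ge F(D^2u)-F(0)=1$ in $\Omega$, so $w=C_0\partial_e u-u$ is a strong supersolution of $a_{ij}\partial_{ij}w\le-1$ there, with $w=0$ on $B'_r$ and off $\Omega$. Two cautions here: the identity $a_{ij}\partial_{ij}u=1$ is only the inequality $\ge 1$ (which is what you need), and your difference-quotient fallback phrased purely in terms of Pucci operators does not combine, since $\mathcal{P}^-(D^2(C_0u_h-u))\le C_0\mathcal{P}^-(D^2u_h)-\mathcal{P}^-(D^2u)$ and $\mathcal{P}^-(D^2u)\ge 1$ is false in general; one must keep the same linearized coefficients $a_{ij}$ for both $\partial_e u$ and $u$, as the paper does.

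The barrier step, however, has a genuine gap. With $\mu=\mu(n,\Lambda,r)$ fixed, your paraboloid $\phi(x)=-\epsilon_0+\mu\big((r/2)^2-|x-x_0|^2\big)$ is strictly positive near its center, so the required boundary inequality $\phi\le w$ fails precisely on the parts of $\partial\big(A\cap B_{r/2}^+\big)$ you cannot control: on $\partial A$ and on $\{x_n=0\}$ one only knows $w=0$, and nothing prevents these sets from coming arbitrarily close to $x_0$ (the coincidence set may well occupy the middle of $B_{r/2}^+$). Indeed the conclusion you extract, $w\ge c_0-\epsilon_0>0$ at points well inside $B_{r/2}^+$, is false in general (take $u\equiv 0$, so $w\equiv 0$); only $w\ge 0$ can hold, so no re-centering or covering can rescue this normalization of the barrier. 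The repair is the paper's argument: assume $w(y)<0$ for some $y\in B_{r/2}^+$ and compare $w$ on $\Omega\cap B_r^+$ with the nonpositive paraboloid $-|x-y|^2/(2n\Lambda)$, i.e.\ apply the minimum principle to $w+|x-y|^2/(2n\Lambda)$, which is a supersolution because $a_{ij}\partial_{ij}w\le-1$ absorbs the contribution $\le 1$ of the quadratic. Then the boundary inequality is automatic on $\partial\Omega$ and on $\{x_n=0\}$ (where $w\ge 0\ge -|x-y|^2/(2n\Lambda)$), while on $\partial B_r$ one has $|x-y|\ge r/2$, so $w(y)\ge -\epsilon_0+r^2/(8n\Lambda)\cdot 0$ sharpens to the contradiction $0>-\epsilon_0+r^2/(8n\Lambda)$ once $\epsilon_0\le r^2/(8n\Lambda)$; no covering is needed. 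Equivalently, your barrier works only if you tie $\mu$ to $\epsilon_0$ (e.g.\ $\mu=4\epsilon_0/r^2$, so that $\phi\le 0$ and $\phi(x_0)=0$ at the contradiction point), and then the requirement $L\phi\ge Lw$ reproduces exactly the same smallness condition on $\epsilon_0$.
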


\begin{proof}
By convexity of $F$, there exist measurable uniformly elliptic coefficients $a_{ij}$ such that 
$$ 
F(D^2 u(x+he))-F(D^2 u(x)) \ge a_{ij}(\partial_{ij} u(x+he)-\partial_{ij} u(x)) 
$$
if $x \in \Omega$ provided $h$ is small enough. Therefore, 

$$0 \ge a_{ij} \partial_{ij} \partial_e u \hskip .1in \text{in $\Omega$}.$$ Convexity also yields 

$$a_{ij} \partial_{ij} u \ge F(D^2 u(x))-F(0) = 1 \hskip .1in \text{in $\Omega$}.$$ Suppose now that there exists $y \in B_{\frac{r}{2}}^+$ for which  $C_0 \partial_e u(y) - u(y) < 0.$ Let $w(x)=C_0 \partial_e u(x)-u(x)+\frac{|x-y|^2}{2n\Lambda}$. Since $\lambda Id \le (a_{ij}) \le \Lambda Id$, it follows by the above that $L w \le 0$ in $\Omega$ where $L=a_{ij}\partial_{ij}$. The maximum principle implies $\min_{\partial(\Omega \cap B_r^+)} w=\min_{\Omega \cap B_r^+} w < 0$. Note that $w\ge 0$ on $\partial \Omega$ and likewise on $\{x_n=0\}$. Therefore, the minimum occurs on $\partial B_r$ and thus $0 > -\epsilon_0+\frac{1}{8n \Lambda}r^2$, a contradiction if $\epsilon_0$ is small enough.  
\end{proof}

\begin{rem} \label{rem1}
One may take $\epsilon_0=c r^2$, where $c>0$ depends only on the dimension and ellipticity constants of $F$.  
\end{rem}

\begin{rem}
If $u \ge 0$, then $\partial_{e_n} u \ge 0$ on $\{x_n=0\} \cap B_r$ and Lemma \ref{m} holds therefore in this case for all $e \in \mathbb{S}^{n-1}$ such that $e \cdot e_n \ge 0$. 
\end{rem}

The next two lemmas highlight properties of the blow-up candidates. 
\begin{lem} \label{c}
Let $u_0(x)=ax_1x_n+bx_n^2$ with $a \neq 0$ and $R\ge1$. Then there exists $c=c(a,b)>0$ such that $$\inf_D |\nabla u_0(x)| \ge c,$$ where $D=\{x=(x_1,x'',x_n): R > |x| > R/2, |x''| \le \delta(R)\}$ for some $\delta(R)>0$.   
\end{lem}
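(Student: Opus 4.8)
The plan is to compute $\nabla u_0$ explicitly and show that on the annular region $D$ — with the $x''$-coordinates pinched to be very small — the gradient cannot vanish, nor even become small, because the $x_1$-component of the gradient is controlled from below. We have $\nabla u_0(x) = (a x_n,\, 0,\dots,0,\, a x_1 + 2 b x_n)$, where the $0$'s occupy the $x''$-slots. Thus $|\nabla u_0(x)|^2 = a^2 x_n^2 + (a x_1 + 2bx_n)^2$. The only way for this to be small is for both $x_n$ to be small and $a x_1 + 2 b x_n$ to be small; but if $x_n$ is small and $x$ has modulus comparable to $R$ with $|x''| \le \delta(R)$ tiny, then $|x_1|$ must be comparable to $R$, which (since $a \neq 0$) forces $|a x_1 + 2 b x_n|$ to be bounded below. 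This is exactly the mechanism behind the "cylinder oriented in the $x_1$-direction" idea described in the introduction.

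The steps I would carry out: First, fix the geometry. On $D$ we have $R/2 < |x| < R$ and $|x''| \le \delta(R)$, so $x_1^2 + x_n^2 = |x|^2 - |x''|^2 \ge R^2/4 - \delta(R)^2$. Choosing $\delta(R)$ small enough — say $\delta(R) \le R/4$, which certainly can be arranged since $\delta(R)>0$ is ours to pick — gives $x_1^2 + x_n^2 \ge R^2/8$. Second, split into two cases according to the size of $x_n$. \emph{Case 1: $|x_n| \ge \eta R$} for a small absolute constant $\eta$ to be chosen. Then $|\nabla u_0(x)| \ge |a x_n| \ge |a|\eta R \ge |a| \eta$ since $R \ge 1$. \emph{Case 2: $|x_n| < \eta R$}. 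Then from $x_1^2 + x_n^2 \ge R^2/8$ we get $x_1^2 \ge R^2/8 - \eta^2 R^2 \ge R^2/16$ provided $\eta^2 \le 1/16$, so $|x_1| \ge R/4$. Hence $|a x_1 + 2 b x_n| \ge |a||x_1| - 2|b||x_n| \ge |a| R/4 - 2|b|\eta R = R(|a|/4 - 2|b|\eta)$. Now choose $\eta = \min\{1/4,\ |a|/(16 |b| + 1)\}$ so that $|a|/4 - 2|b|\eta \ge |a|/8$; then $|\nabla u_0(x)| \ge |a x_1 + 2 b x_n| \ge |a| R/8 \ge |a|/8$ using $R \ge 1$ again. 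In both cases $|\nabla u_0(x)| \ge c$ with $c = \min\{|a|\eta,\ |a|/8\} > 0$ depending only on $a$ and $b$, and the choice $\delta(R) = R/4$ (or any smaller positive value) works.

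I do not anticipate a genuine obstacle here; the lemma is a direct unwinding of the definitions once one has the idea to separate the regime where $x_n$ is non-negligible (the first gradient component handles it) from the regime where $x_n$ is tiny (forcing $x_1$ large, and the last component handles it). The one point requiring a little care is bookkeeping the dependence of the constants: one must check that $\eta$, and hence $c$, depends only on $a,b$ and not on $R$ — which is why it is essential that both lower bounds carry a factor $R \ge 1$ that can simply be dropped, and that $\delta(R)$ scales linearly in $R$ so that the pinching condition $|x''| \le \delta(R)$ is compatible with $R/2 < |x| < R$ at every scale. A minor sign subtlety: if $b = 0$ the choice of $\eta$ degenerates, so one sets $\eta = 1/4$ in that case, or simply keeps the $+1$ in the denominator as written above to avoid dividing by zero.
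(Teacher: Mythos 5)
Your proof is correct and follows essentially the same route as the paper's: an explicit computation of $\nabla u_0$, a case split on the size of $|x_n|$, and the observation that on the pinched annular region smallness of $|x_n|$ and $|x''|$ forces $|x_1|$ to be comparable to $R$, after which $R\ge 1$ removes the $R$-dependence from the constant. The only difference is cosmetic: the paper expands $|\nabla u_0|^2$ and absorbs the cross term via a weighted Young's inequality with a parameter $\epsilon$ (treating $b\neq 0$), while you keep the sum-of-squares form $a^2x_n^2+(ax_1+2bx_n)^2$ and bound the last component by the triangle inequality, which also handles $b=0$ uniformly.
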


\begin{proof}
Note $|\nabla u_0(x)|^2=a^2x_n^2+a^2x_1^2+2abx_1x_n+4b^2x_n^2$ so that if $|x_n|>\frac{1}{3}$, then $|\nabla u_0(x)|^2 \ge \frac{a^2}{9}$. If $|x_n| \le \frac{1}{3}$, then for points that satisfy $|x''| \le \sqrt{\frac{5}{72}R}$, where $x''=(x_2,x_3,\ldots,x_{n-1})$, it follows that 
$$
x_1^2 > \frac{23}{72} R^2.
$$
If $b \neq 0$, let $\epsilon^2 \in (\frac{1}{a^2+4b^2}, \frac{1}{b^2}).$ Then 
\begin{align*}
|\nabla u_0(x)|^2&\ge(a^2+4b^2-\frac{1}{\epsilon^2})x_n^2+(a^2-\epsilon^2a^2b^2)x_1^2\\
&> (a^2-\epsilon^2a^2b^2)(\frac{23}{72} R^2).
\end{align*} 
\end{proof}
%\begin{lem} \label{c}
%Let $u_0(x)=ax_1x_n+bx_n^2$ with $a \neq 0$ and $R\ge 1$. Then there exists $c=c(a,b)>0$ such that for all cylinders $S_{(\alpha, \beta)}(e_1)$ there exists $x \in S_{(\alpha, \beta)}(e_1) \cap B_R^+$ such that $|\nabla u_0(x)| \ge c$ and $-R < x_1 < -R/2$.   
%\end{lem}
%
%\begin{proof}
%Note that $|\nabla u_0(x)|^2=a^2x_n^2+a^2x_1^2+2abx_1x_n+4b^2x_n^2$ so that if $x_n>\frac{1}{2}$, then $|\nabla u_0(x)|^2 \ge \frac{a^2}{4}$. If $b \neq 0$ and $x_n \le \frac{1}{2}$, then consider the intersection of the hyperplane $H_t=\{x_n=t\}$ for $t \le \frac{1}{2}$ with $\partial B_R$. For points on this intersection that satisfy $|x''| \le \frac{1}{4}$, where $x''=(x_2,x_3,\ldots,x_{n-1})$, it follows that 
%\begin{align*}
%x_1^2 &\ge R^2-\frac{1}{4}-(x_2^2+x_3^2+\dots+x_{n-1}^2) \\
%&\ge R^2-\frac{3}{8} \ge \frac{1}{2}R^2.
%\end{align*}
%Now let $\epsilon^2 \in (\frac{1}{a^2+4b^2}, \frac{1}{b^2}).$ Then 
%$$|\nabla u_0(x)|^2\ge(a^2+4b^2-\frac{1}{\epsilon^2})x_n^2+(a^2-\epsilon^2a^2b^2)x_1^2 \ge (a^2-\epsilon^2a^2b^2)(\frac{R^2}{2}).$$ If $b=0$ and $x$ is a point on the previous intersection, then  
%$$|\nabla u_0(x)|^2 \ge a^2 \frac{R^2}{2}.$$ To finish the proof, note that for any cylinder $S_{(\alpha, \beta)}(e_1)$ and $t \in (\beta-\alpha, \beta+\alpha)$, there exists $x \in H_t \cap S_{(\alpha, \beta)}(e_1) \cap B_R^+$ for which $$|\nabla u_0(x)| \ge \min\Big \{\frac{a^2}{4}, (a^2-\epsilon^2a^2b^2)(\frac{R^2}{2}), \frac{R^2}{2}\Big\}$$ with $-R < x_1 < -R/2$.       
%\end{proof}

\begin{lem} \label{d}
Let $u_0(x)=ax_1x_n+bx_n^2$ with $a > 0$ and $R\ge 1$. Then there exists $C_0=C_0(a,b,R)>0$ such that 
$$
C_0\partial_{x_1} u_0(x)-u_0(x) \ge 0
$$
in $B_R^+$.
\end{lem}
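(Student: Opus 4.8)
The plan is to exhibit an explicit value of $C_0$ that works, since $u_0$ is a very concrete quadratic polynomial. Write $C_0\partial_{x_1}u_0(x)-u_0(x) = C_0 a x_n - ax_1x_n - bx_n^2 = x_n\bigl(C_0 a - a x_1 - b x_n\bigr)$. Because we are in $B_R^+$, the factor $x_n$ is strictly positive on the open half-ball (and the expression vanishes on $\{x_n=0\}$), so the inequality $C_0\partial_{x_1}u_0-u_0\ge 0$ in $B_R^+$ is equivalent to showing that the linear function $g(x):=C_0 a - a x_1 - b x_n$ is non-negative on $\overline{B_R^+}$.

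The next step is to bound $g$ from below on the closed half-ball. On $\overline{B_R^+}$ we have $|x_1|\le R$ and $|x_n|\le R$, hence $-a x_1 - b x_n \ge -a R - |b| R = -(a+|b|)R$ (using $a>0$). Therefore $g(x)\ge C_0 a - (a+|b|)R$ for every $x\in\overline{B_R^+}$. It now suffices to choose $C_0$ so that $C_0 a \ge (a+|b|)R$, i.e. $C_0 := \dfrac{(a+|b|)R}{a}$, which is a strictly positive constant depending only on $a$, $b$, and $R$, exactly as claimed. With this choice $g\ge 0$ on $\overline{B_R^+}$, and multiplying by $x_n\ge 0$ on $B_R^+$ gives $C_0\partial_{x_1}u_0(x)-u_0(x)=x_n\,g(x)\ge 0$ there.

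There is essentially no hard part here; the only thing to be mildly careful about is the sign bookkeeping on the $b x_n$ term (handling both signs of $b$ via $|b|$) and the observation that the conclusion really hinges on $a>0$ together with $x_n\ge 0$ in the half-ball — if $a$ were allowed to be negative the argument would fail, which is why the hypothesis $a>0$ (rather than merely $a\ne 0$ as in Lemma \ref{c}) is needed. One could also phrase the final estimate slightly more sharply by noting that the minimum of $-ax_1-bx_n$ over $\overline{B_R^+}$ is attained on the boundary and equals $-R\sqrt{a^2+b^2}$ if that minimizer has $x_n\le 0$, or a correspondingly adjusted value otherwise, but the crude bound $(a+|b|)R$ already suffices and keeps the constant clean.
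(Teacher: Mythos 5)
Your proof is correct and follows essentially the same route as the paper: factor out $x_n\ge 0$ and choose $C_0$ large enough that the remaining linear factor is non-negative on the half-ball. In fact your use of $|b|$ is slightly more careful than the paper's stated threshold $C_0\ge \frac{b}{a}R+R$, which as written only covers $b\ge 0$.
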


\begin{proof}
The condition is equivalent to $ax_n(C_0-x_1) \ge b x_n^2$. Since $x_1 \le R$ and $0\le x_n \le R$, it follows that any $C_0 \ge \frac{b}{a}R+R$ satisfies the condition.
\end{proof}

A non-uniform version of the next result was shown in \cite{MR3513142}. It consists of the following alternative, either all re-scalings yield half space solutions or one may construct a specific sequence which produces a limit having the form in Lemma \ref{c}. The main point here is that this procedure can be applied to \textit{blow-up limits of $\{u_j\}$} $\subset P_1^+(0,M,\Omega)$, i.e. limits of the form $$\lim_{k \rightarrow \infty} \frac{u_{j_k}(s_kx)}{s_k^2},$$ where $\{j_k\}$ is a subsequence of $\{j\}$ and $s_k \rightarrow 0^+$.

\begin{prop} \label{th1}
Let $\{u_j\} \subset P_1^+(0,M, \Omega)$ and suppose $\{\nabla u_j \neq 0\} \cap \{x_n>0\} \subset \Omega$, $\nabla u_j(0)=0$. Then one of the following is true:\\
(i) all blow-up limits of $\{u_j\}$ at the origin are of the form $u_0(x)=b x_n^2$ for some $b >0$;\\ 
(ii) there exists a blow-up limit of $\{u_j\}$ of the form $ax_1x_n+bx_n^2$ for $a \neq 0$, $b \in \mathbb{R}$.
\end{prop}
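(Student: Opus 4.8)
The plan is to work with the blow-up family $u_{j,s}(x) = u_j(sx)/s^2$ and exploit the uniform $C^{1,1}$ estimate from \cite{MR3513142}, which gives a bound $\|u_{j,s}\|_{C^{1,1}(B_{1/2}^+)} \le C(n,\Lambda,M)$ independent of $j$ and $s$. Fix a sequence $s_k \to 0^+$ and a subsequence $\{j_k\}$; by Arzel\`a--Ascoli, $u_{j_k,s_k} \to u_0$ in $C^{1,\alpha}_{\loc}$ along a further subsequence, and $u_0$ is a global solution on $\mathbb{R}_+^n$ with $F(D^2 u_0) = \chi_{\Omega_0}$ for some limiting set, $u_0 = 0$ on $\{x_n=0\}$, $\nabla u_0(0)=0$, and (by the $C^{1,1}$ growth) $u_0$ has quadratic growth. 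The first step is the standard homogeneity/classification of such global half-space solutions: a non-negative (or suitably signed) global solution with quadratic growth vanishing on $\{x_n=0\}$ and with $\nabla u_0(0)=0$ must be a polynomial of the form $u_0(x) = \tfrac12 x^T A x$ restricted by the boundary condition to $u_0(x) = x_n \ell(x')  + b x_n^2$ for a linear $\ell$; after a rotation in the $x'$-variables, $\ell(x') = a x_1$, so $u_0(x) = a x_1 x_n + b x_n^2$. This dichotomy is exactly (i) ($a=0$, $b>0$) versus (ii) ($a \ne 0$).

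So the real content is not that an individual blow-up limit has this form — that is the already-understood classification — but the \emph{uniform alternative}: if (i) fails for \emph{some} choice of $\{s_k\},\{j_k\}$, i.e. some blow-up limit has $a \ne 0$, then we are in case (ii); and if (ii) fails, we want to conclude \emph{every} blow-up limit is of the pure form $b x_n^2$ with $b>0$ and rule out $b \le 0$ and rule out limits that are not even quadratic polynomials (e.g. genuine two-phase limits $\max(\text{stuff},0)$-type). Here is where I would use Lemma \ref{m}: suppose no blow-up limit has the form $a x_1 x_n + bx_n^2$ with $a\ne0$. I would argue by contradiction that then, at every small scale, $u_j$ is close (after rescaling) to a half-space solution $b x_n^2$, and by the non-degeneracy/optimal growth it must in fact have $b>0$ (the case $b=0$ or $b<0$ being incompatible with $0 \in \overline{\Gamma(u_j)}$ or with $F(D^2 u_0)=1$ on a nonempty set). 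The monotonicity improvement enters to propagate the one-sided inequality $C_0 \partial_e u_j - u_j \ge 0$ from the near-half-space behavior at one scale down to all smaller scales, which both forces convergence of \emph{all} blow-up limits to the same type and prevents the emergence of a genuinely two-phase limit.

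Concretely the steps, in order: (1) set up the rescalings and extract a subsequential limit $u_0$ using the uniform $C^{1,1}$ bound of \cite{MR3513142}; (2) verify $u_0$ is a global solution with the stated boundary and growth conditions; (3) invoke the classification of global solutions (homogeneity argument plus the boundary condition, as in \cite{MR1745013} for the interior case) to get $u_0(x) = a x_1 x_n + b x_n^2$ after rotating $x''$; (4) if some limit has $a\ne0$ we land in (ii) and are done; (5) otherwise, every limit is of the form $bx_n^2$, and using Lemma \ref{m} together with the hypothesis $0 \in \overline{\Gamma}$ (so the coincidence set is nonempty near $0$, forcing $b \ne 0$ via $F(D^2u_0)=1$ somewhere) and the sign/ellipticity constraints, conclude $b>0$, giving (i).

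The main obstacle I expect is step (5): ruling out \emph{all} blow-up limits that are not of the clean polynomial form — in particular genuinely two-phase global limits and the degenerate case $b=0$ — \emph{uniformly} in $j$. The $C^{1,1}$ bound alone gives compactness but not the classification; one needs the monotonicity improvement of Lemma \ref{m} to show that once a rescaled solution is close to $bx_n^2$ it stays in a cone $\{C_0 \partial_{x_1} u - u \ge 0\}$, which is the mechanism that both forbids the two-phase behavior at smaller scales and pins down $b>0$. Carefully chaining this across dyadic scales, with constants independent of $j$ (using Remark \ref{rem1} so that the smallness threshold scales like $r^2$), is the delicate part; the rest is by now routine blow-up analysis.
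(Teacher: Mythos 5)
Your proposal has a genuine gap at its core step. In step (3) you ``invoke the classification of global solutions'' to assert that \emph{every} blow-up limit is a polynomial of the form $ax_1x_n+bx_n^2$. No such classification is available here: the limits are sign-changing solutions of the two-phase problem $F(D^2u_0)=1$ in $\Omega_0$, $|\nabla u_0|=0$ off $\Omega_0$, in a half-space, and classifying these blow-ups is precisely what the proposition (and ultimately Theorem \ref{cul}) is meant to establish, so the step is circular. The paper never proves, and does not need, that all limits are polynomials. Its argument is different: set
$$N=\limsup_{|x|\to 0,\,x_n>0}\frac{1}{x_n}\sup_{u\in\{u_j\}}\sup_{e\in\mathbb{S}^{n-2}\cap e_n^{\perp}}\partial_e u(x),$$
which is finite by the uniform $C^{1,1}$ bound and the zero boundary data (tangential derivatives vanish on $\{x_n=0\}$). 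Since $N$ controls $|\partial_{x_i}u_0(x)|\le Nx_n$ for every blow-up limit and every tangential direction, the dichotomy is immediate: if $N=0$, every limit depends only on $x_n$ and the equation plus boundary condition force $u_0=bx_n^2$, giving (i) for \emph{all} limits without any polynomial classification; if $N>0$, one blows up along the extremal sequence realizing $N$, so that $v=\partial_{x_1}u_0$ satisfies $v\le Nx_n$ with equality at an interior point, and the strong maximum principle/Hopf-type argument for the linearized operator (as in \cite{MR3513142}) forces $v\equiv Nx_n$, producing \emph{one} limit of the form $ax_1x_n+bx_n^2$ with $a=N\neq 0$, which is all that (ii) claims.

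Your step (5) is also misdirected. Lemma \ref{m} plays no role in this proposition (it enters later, in Proposition \ref{ke}); more importantly, the propagation mechanism you describe cannot start in the situation you apply it to: for a half-space solution $bx_n^2$ with $b>0$ one has $C_0\partial_{x_1}u_0-u_0=-bx_n^2$, which is \emph{not} bounded below by $-\epsilon$ with $\epsilon$ small relative to the scale, so the hypothesis of Lemma \ref{m} fails precisely when the rescalings are close to the half-space profile. The lemma is only useful when the nearby profile is $ax_1x_n+bx_n^2$ with $a\neq 0$, where Lemma \ref{d} supplies the exact monotonicity for the limit. So the ``delicate part'' you flag is not merely delicate but rests on the wrong tool, and the missing idea is the extremal quantity $N$ together with the touching/maximum-principle argument at the point where $\partial_{x_1}u_0$ meets the barrier $Nx_n$.
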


\begin{proof}
Let $$N:= \limsup_{|x|\rightarrow 0, x_n>0} \frac{1}{x_n} \sup_{u \in \{u_j\}} \sup_{e \in \mathbb{S}^{n-2} \cap e_n^{\perp}} \partial_e u(x)$$ and consider a sequence $\{x^k\}_{k \in \mathbb{N}}$ with $x_n^k>0$, $u_{j_k} \in \{u_j\}$, and $e^k \in \mathbb{S}^{n-2} \cap e_n^{\perp}$ 
such that the previous limit is given by $$\lim_{k \rightarrow \infty} \frac{1}{x_n^k}  \partial_{e^k} u_{j_k}(x^k).$$ Note that $N<\infty$ by $C^{1,1}$ regularity for the class $P_1^+(0,M, \Omega)$ and the boundary condition (see \cite{MR3513142}). By compactness, $e^k \rightarrow e_1 \in \mathbb{S}^{n-2}$ (along a subsequence) so that up to a rotation, 
$$N= \lim_{k \rightarrow \infty} \frac{1}{x_n^k}  \partial_{x_1} u_{j_k}(x^k).$$   
\noindent Next, if $$\tilde u_j(x):= \frac{u_{k_j}(s_jx)}{s_j^2} \rightarrow u_0(x)$$ for some sequence $s_j \rightarrow 0^+$, where the convergence is in $C_{loc}^{1,\alpha}(\mathbb{R}_+^n)$ for any $\alpha \in [0,1)$, $u_0 \in C^{1,1}(\mathbb{R}_+^n)$ satisfies the following PDE in the viscosity sense
\begin{equation} \label{m2}
\begin{cases}
F(D^{2}u_0)=1 & \text{a.e. in }\mathbb{R}_+^n\cap\Omega_0\\
|\nabla u_0|=0 & \text{in }\mathbb{R}_+^n\backslash\Omega_0\\
u=0 & \text{on }\mathbb{R}_+^{n-1},
\end{cases}
\end{equation}
where $\Omega_0 =\{\nabla u_0 \neq 0 \} \cap \{x_n>0\}$. Note that 
\begin{equation} \label{part}
N \ge \lim_j \bigg| \frac{\partial_{x_i}  u_{k_j}(s_j x)}{s_j x_n}\bigg| = \lim_j \bigg| \frac{\partial_{x_i} \tilde u_j(x)}{x_n} \bigg| =\bigg|\frac{\partial_{x_i} u_0(x)}{x_n}\bigg|
\end{equation}
for all $i \in \{1,\ldots, n-1\}$. 
If $N=0$, then $\partial_{x_i} u_0=0$ for all $i \in \{1,\ldots,n-1\}$ so that $u_0(x)=u_0(x_n)$ and the conditions readily imply $u_0(x_n)=bx_n^2$. Since $N$ does not depend on the sequence $\{s_j\}$ it follows that in this case all blow-up limits have the previously stated form. Suppose that $N>0$, let $r_k=|x^k|$, and consider the re-scaling of $u_{j_k}$ with respect to $r_k$. Note that along a subsequence, $y^k:=\frac{x^k}{r_k} \rightarrow y \in \mathbb{S}^{n-1}$. By the choice of $r_k$, $$\lim_{k \rightarrow \infty} \frac{v(y^k)}{y_n^k}=\lim_{k \rightarrow \infty} \frac{\partial_{x_1} \tilde u_{k}(y^k)}{y_n^k}=\lim_{k \rightarrow \infty} \frac{\partial_{x_1} u_{j_k}(r_ky^k)}{r_ky_n^k} =N,$$ where $v= \partial_{x_1} u_0$. In particular, $$v(y)=Ny_n$$ and by the argument in \cite{MR3513142}, $u_0(x)=ax_1x_n+bx_n^2$ with $a \neq 0$. 
\end{proof}
\subsection{Theorems \ref{tt} and \ref{cul}} \label{blowup}
In what follows, the technical tools are utilized to prove that either all re-scalings yield half-space solutions or there exists a subsequence of the re-scalings which are classical solutions in a small half-ball around the origin.  
\begin{prop} \label{ke}
Suppose $\{u_j\} \subset P_1^+(0,M, \Omega)$. If $0 \in \overline{\{u_j \neq 0\}}$ and $\nabla u_j(0)=0$, then one of the following is true:\\
(i) all blow-up limits of $\{u_j\}$ at the origin are of the form $u_0(x)=bx_n^2$ for $b>0$;\\
(ii) there exists $\{u_{k_j}\} \subset \{u_j\}$ such that for all $R \ge 1$, there exists $j_R \in \mathbb{N}$ such that for all $j \ge j_R$, $$u_{k_j} \in C^{2,\alpha}(B_{\frac{Rr_{j}}{4}}^+),$$ where the sequence $\{r_{j}\}$ depends on $\{u_j\}$.  
\end{prop}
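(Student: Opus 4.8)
The plan is to combine Proposition \ref{th1} with the monotonicity improvement of Lemma \ref{m} and the structure of the candidate limits $ax_1x_n+bx_n^2$ coming from Lemma \ref{c} and Lemma \ref{d}. First I would apply Proposition \ref{th1} to the sequence $\{u_j\}$: either alternative (i) holds and we are done, or there is a blow-up limit $u_0(x)=ax_1x_n+bx_n^2$ with $a\neq 0$; after a rotation in the $x''$-variables and possibly replacing $e_1$ by $-e_1$ we may assume $a>0$. By definition of a blow-up limit of $\{u_j\}$ there is a subsequence $\{u_{k_j}\}\subset\{u_j\}$ and scales $s_j\to 0^+$ with $\tilde u_j(x):=u_{k_j}(s_jx)/s_j^2\to u_0$ in $C^{1,\alpha}_{loc}(\overline{\mathbb{R}^n_+})$. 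I would set $r_j:=s_j$ (this is the sequence advertised in (ii)), fix $R\ge 1$, and work on the ball $B_R^+$ in the rescaled variables; producing $C^{2,\alpha}$-regularity of $u_{k_j}$ on $B^+_{Rr_j/4}$ is then the same as producing $C^{2,\alpha}$-regularity of $\tilde u_j$ on $B^+_{R/4}$, i.e. showing that $B^+_{R/4}\subset\Omega$ for $\tilde u_j$ once $j$ is large, since inside $\Omega$ the function solves the uniformly elliptic $F(D^2u)=1$ and elliptic regularity for convex $F$ upgrades $C^{1,1}$ to $C^{2,\alpha}$ interiorly and up to the flat boundary with zero Dirichlet data.

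The heart of the argument is therefore to show $|\nabla \tilde u_j|>0$ on $B^+_{R/4}$ for all large $j$. Here I would use Lemma \ref{c}: on the annular cylindrical region $D=\{R/2<|x|<R,\ |x''|\le\delta(R)\}$ one has $|\nabla u_0|\ge c(a,b)>0$, so by $C^1_{loc}$ convergence $|\nabla \tilde u_j|\ge c/2$ on $D$ for $j$ large. To propagate this lower bound inward to all of $B^+_{R/4}$ — recall the obstruction noted in the introduction, that $\nabla u_0$ vanishes at points $(0,t,0)$ — I would invoke the monotonicity trick. By Lemma \ref{d} applied with radius $\sim R$, $C_0\partial_{x_1}u_0-u_0\ge 0$ on $B_R^+$ for $C_0=C_0(a,b,R)$; by $C^1$ convergence and boundedness, $C_0\partial_{x_1}\tilde u_j-\tilde u_j\ge-\epsilon_j$ on $B_R^+$ with $\epsilon_j\to 0$. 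Rescaling Lemma \ref{m} (or Remark \ref{rem1}) to the ball of radius $R$: once $\epsilon_j\le c(n,\Lambda)R^2$, which holds for $j$ large, we get $C_0\partial_{x_1}\tilde u_j-\tilde u_j\ge 0$ on $B^+_{R/2}$, i.e. $\tilde u_j\le C_0\partial_{x_1}\tilde u_j$ there. This says: wherever $\tilde u_j=0$ we also have $\partial_{x_1}\tilde u_j\ge 0$, and more importantly it lets one control the growth of $\tilde u_j$ along $x_1$-segments; combined with the positive gradient on the outer region $D$, a point $x$ with $|\nabla\tilde u_j(x)|=0$ would have to lie in $\mathrm{int}\{\tilde u_j=0\}$ (since $F(D^2\tilde u_j)=1\neq 0$ wherever $\tilde u_j$ is twice differentiable and not in the zero set's interior in a neighborhood, one needs $x\in\{\nabla\tilde u_j=0\}\subset (\mathbb{R}^n_+\setminus\Omega)$), and then sliding in the $+e_1$-direction the inequality $\tilde u_j\le C_0\partial_{x_1}\tilde u_j$ forces $\tilde u_j\equiv 0$ along that segment until it exits through $D$, contradicting $|\nabla\tilde u_j|\ge c/2>0$ on $D$. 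Hence $\{\nabla\tilde u_j=0\}\cap B^+_{R/4}=\emptyset$, i.e. $B^+_{R/4}\subset\Omega$.

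Concretely the steps are: (1) dichotomy from Proposition \ref{th1}, reducing to the case of a limit $ax_1x_n+bx_n^2$ with $a>0$, and fixing the subsequence $\{u_{k_j}\}$, scales $r_j=s_j$; (2) for fixed $R$, transfer everything to $\tilde u_j$ on $B_R^+$; (3) Lemma \ref{c} plus $C^1$ convergence give $|\nabla\tilde u_j|\ge c/2$ on the cylindrical annulus $D\subset B_R^+$ for $j\ge j_R^{(1)}$; (4) Lemma \ref{d} plus $C^1$ convergence give $C_0\partial_{x_1}\tilde u_j-\tilde u_j\ge-\epsilon_j$ with $\epsilon_j\to 0$, and the rescaled Lemma \ref{m}/Remark \ref{rem1} promotes this to $C_0\partial_{x_1}\tilde u_j-\tilde u_j\ge 0$ on $B^+_{R/2}$ for $j\ge j_R^{(2)}$; (5) use the monotonicity inequality to slide from any putative zero-gradient point in $B^+_{R/4}$ in the $+e_1$-direction and reach $D$, contradiction; conclude $B^+_{Rr_j/4}\subset\Omega$ for $u_{k_j}$; (6) apply boundary $C^{2,\alpha}$ elliptic regularity for convex fully nonlinear operators (flat boundary, zero Dirichlet data) to conclude $u_{k_j}\in C^{2,\alpha}(B^+_{Rr_j/4})$ for $j\ge j_R:=\max\{j_R^{(1)},j_R^{(2)}\}$. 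The main obstacle is step (5): making rigorous the passage from the pointwise monotonicity inequality $\tilde u_j\le C_0\partial_{x_1}\tilde u_j$ to the statement that the set $\{\nabla\tilde u_j=0\}$ cannot penetrate $B^+_{R/4}$, i.e. carefully handling the interplay between the zero set (where $\nabla\tilde u_j$ may vanish) and the solid set (where $F(D^2\tilde u_j)=1$ forbids it), and verifying that the $x_1$-segment emanating from such a point actually exits through the good cylinder $D$ where the gradient is bounded below — this is precisely where the choice of cylinder orientation and the geometry of $D$ in Lemma \ref{c} are used, and it is the replacement for the two-dimensional argument of \cite{MR3513142}.
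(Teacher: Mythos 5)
Your skeleton is the same as the paper's (the dichotomy of Proposition \ref{th1}, the gradient bound of Lemma \ref{c}, the monotonicity from Lemmas \ref{d} and \ref{m}, sliding in the $e_1$-direction, then Evans--Krylov), but the decisive step --- your step (5), which you yourself flag as the main obstacle --- is not closed, and as sketched it fails in two ways. First, your reduction is too strong: you aim to prove $B^+_{R/4}\subset\Omega$ by asserting that any point of $B^+_{R/4}$ where $\nabla\tilde u_j$ vanishes must lie in $\mathrm{int}\{\tilde u_j=0\}$. That is unjustified: a point where $\tilde u_j$ and $\nabla\tilde u_j$ both vanish need not have any interior zero set near it (it can be a free-boundary-type point), and your sliding argument says nothing about such points. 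The paper only excludes $\Gamma_i(\tilde u_j)=\partial(\mathrm{int}\{\tilde u_j=0\})$ from $B^+_{R/2}$ and then disposes of the remaining degenerate set by a non-degeneracy argument: $(B^+_{R/2}\setminus\Omega_j)^o=\emptyset$ forces $|B^+_{R/2}\setminus\Omega_j|=0$, so the $C^{1,1}$ function $\tilde u_j$ satisfies $F(D^2\tilde u_j)=1$ a.e., hence in the viscosity sense, and the up-to-the-boundary Evans--Krylov theorem gives $C^{2,\alpha}(B^+_{R/4})$. Without this measure-theoretic step your route cannot conclude, since ``$|\nabla\tilde u_j|>0$ everywhere in $B^+_{R/4}$'' is neither needed nor obtainable from the tools you cite.

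Second, the sliding itself does not yield the contradiction you describe. The inequality $C_0\partial_{x_1}\tilde u_j-\tilde u_j\ge 0$ says $e^{-x_1/C_0}\tilde u_j$ is nondecreasing in $x_1$; starting from a zero it only gives $\tilde u_j\ge 0$ forward, not $\tilde u_j\equiv 0$, and even $\tilde u_j=0$ at a point of $D$ is perfectly compatible with $|\nabla\tilde u_j|\ge c/2$ there, so no contradiction results; moreover your monotonicity is only established on $B^+_{R/2}$, while $D$ lies in $\{|x|>R/2\}$. The paper's mechanism is genuinely different: given a ball $B\subset\mathrm{int}\{\tilde u_j=0\}\cap B^+_{R/2}$, it selects in the associated $e_1$-cylinder a point $x$ with $|\nabla\tilde u_j(x)|\ge c/2$ and $-R<x_1<-R/2$ (behind the ball), uses the monotonicity to rule out $\tilde u_j>0$ on a small ball $\tilde B\ni x$, hence $\tilde u_j<0$ there, slides $\tilde B$ in the $+e_1$ direction to a first touching point $y$ of $\{\tilde u_j=0\}$, applies Hopf's lemma (valid because convexity gives $a_{kl}\partial_{kl}\tilde u_j\ge 0$) to obtain a strictly positive normal derivative at $y$, and then uses monotonicity once more to show $\tilde u_j\le 0$ near $y$, making $y$ an interior maximum with $\nabla\tilde u_j(y)=0$ --- the contradiction. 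This sign analysis and Hopf-lemma step is the essential content replacing the two-dimensional argument, and it is absent from your proposal.
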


\begin{proof}
Either all blow-up limits are of the form $u_0(x)=bx_n^2$ or there exists a subsequence, say $$\tilde u_j(x)=\frac{u_{k_j}(r_jx)}{r_j^2},$$ producing a limit of the form $u_0(x)=ax_1x_n+bx_n^2$ for $a>0$ (up to a rotation). Let $c=c(a,b)$ be the constant from Lemma \ref{c} and note that since $\tilde u_j \rightarrow u_0$ in $C_{loc}^{1,\alpha}$, there exists $j_0=j_0(a, R) \in \mathbb{N}$ such that for every cylinder $S_{(\alpha, \beta)}(e_1)$ there exists $x \in S_{(\alpha, \beta)}(e_1) \cap B_R^+$ such that $|\nabla \tilde u_j(x)| \ge \frac{c}{2}$ for all $j \ge j_0$, where $R \ge 1$. Now choose a constant  $C_0=C_0(a,b, R)>0$ (in fact, one may select any constant $C_0 \ge R(\frac{b}{a}+1)$) such that $$C_0 \partial_{x_1} u_0 -u_0 \ge 0$$ in $B_R^+$ and $j_0' \ge j_0$ for which 
\begin{equation} \label{mon}
C_0 \partial_{x_1} \tilde u_j -  \tilde u_j \ge 0 \hskip .2 in \text{in $B_{\frac{R}{2}}^+$}
\end{equation} 
whenever $j \ge j_0'$ by Lemma \ref{m}. Now fix $j \ge j_0'$ and suppose $z \in \Gamma_i(\tilde u_j) \cap B_{\frac{R}{2}}^+$. Then there exists a ball $B \subset \text{int} \{\tilde u_j=0\} \cap B_{\frac{R}{2}}^+$. Note that this ball generates a cylinder $S$ in the $e_1$- direction. Now select $x \in S \cap B_R^+$ for which $|\nabla \tilde u_j(x)|>0$ and $-R<x_1<-R/2$. In particular, there exists a small ball around $x$, say $\tilde B$ such that $F(D^2 \tilde u_j)=1$ in $\tilde B$ and one may assume $\tilde B \subset \{\tilde u_j \neq 0\}$. Note that $\tilde B$ is contained in the cylinder $S$ and let $E_t=\tilde B+te_1$ for $t \in \mathbb{R}$. If $t>0$ is such that $\overline{E_t} \cap \{\tilde u_j=0\} \neq \emptyset$, and for all $0\le s<t$, $E_s \cap \{\tilde u_j=0\} = \emptyset$, choose $y \in \overline{E_t} \cap \{\tilde u_j=0\}.$ Moreover, note that if $\tilde u_j > 0$ in $\tilde B$, then by \eqref{mon} it follows that $\tilde u_j$ is strictly positive at a point in $\{\tilde u_j=0\}$, a contradiction. Thus $\tilde u_j < 0$ in $\tilde B$. Next, by convexity of $F$
$$a_{kl} \partial_{kl} \tilde u_j \ge 0 \hskip .1in \text{in $E_t$}.$$ Since $0=\tilde u_j(y)>\tilde u_j(x)$ for $x \in E_t$ and $y$ satisfies an interior ball condition, then Hopf's lemma implies that $\frac{\partial}{\partial n} \tilde u_j(y)>0$, where $n$ is the outer normal to the ball at $y$. Now, if there exists $z \in B_\delta(y)$ such that $\tilde u_j(z)>0$, then this contradicts the monotonicity, if $\delta>0$ is sufficiently small: $\overline{E_{\eta}} \subset B \subset int \{\tilde u_j=0\}$ for $\eta>0$ large enough and since $\tilde u_j(z)>0$, the monotonicity \eqref{mon} implies that $\tilde u_j(z+e_1s)>0$, for some $s>0$ such that $z+e_1s \in \{\tilde u_j=0\}$. Hence, $\tilde u_j\le 0$ on $B_\delta(y)$ and thus $\nabla \tilde u_j(y)=0$, a contradiction.      
The conclusion is that for $j \ge j_0'$, $$\Gamma_i(\tilde u_j) \cap B_{\frac{R}{2}}^+=\emptyset.$$ In particular, $(B_{\frac{R}{2}}^+ \setminus \Omega_j)^o=\emptyset$ and non-degeneracy implies that $|B_{\frac{R}{2}}^+\setminus \Omega_j|=0$. Thus the $C^{1,1}$ function $\tilde u_j$ satisfies $F(D^2 \tilde u_j)=1$ in $B_{\frac{R}{2}}^+$ in the viscosity sense and the up to the boundary Evans-Krylov theorem implies that $\tilde u_j \in C^{2,\alpha}(B_{\frac{R}{4}}^+).$ In particular, $u_{k_j} \in C^{2,\alpha}(B_{\frac{Rr_j}{4}}^+).$ 
\end{proof}

\begin{proof}[proof of Theorem \ref{tt}]
If not, then there exists $\epsilon>0$ such that for all $k \in \mathbb{N}$ there exists $u_k \in P_1^+(0,M, \Omega)$ with 
\begin{equation} \label{cont}
\Gamma(u_k) \cap B_{1/k}^+ \cap \mathcal{C}_\epsilon \neq \emptyset, 
\end{equation}
where $0 \in \overline{\Gamma(u_k)}.$ Now we consider two cases. First, suppose all blow-ups of $\{u_k\}$ are half-space solutions. Let $x_k \in \Gamma(u_k) \cap B_{1/k}^+ \cap \mathcal{C}_\epsilon$ and set $y_k=\frac{x_k}{r_k}$ with $r_k=|x_k|$. Consider $\tilde u_k(x)=\frac{u_k(r_kx)}{r_k^2}$ so that $y_k \in \Gamma(\tilde u_k)$, $\tilde u_k \rightarrow bx_n^2$, $y_{k} \rightarrow y \in \partial B_1 \cap C_\epsilon$ (up to a subsequence), and $y \in \Gamma(u_0)$, a contradiction. Second, select a subsequence $\{u_{k_j}\}$ of $\{u_k\}$ such that for all $j \ge j_2$, $u_{k_j} \in C^{2,\alpha}(B_{\frac{r_{j}}{2}}^+)$, where $j_2 \in \mathbb{N}$ and the sequence $\{r_{j}\}$ depends on $\{u_k\}$. Since $0 \in \overline{\Gamma(u_{k_j})},$ there exists $$x_j \in \Gamma(u_{k_j}) \cap B_{\frac{r_{j}}{2}}^+$$ which contradicts the continuity of $F$.   
\end{proof}

\begin{proof}[proof of Theorem \ref{cul}]
By Proposition \ref{ke}, either $u_0(x)=bx_n^2$ or $D^2u(0)$ exists and the rescaling of $u$ is given by $$u_j(x)=\frac{u(r_j x)}{r_j^2}=\langle x, D^2u(0)x\rangle + o(1).$$ Since $u_0(x',0)=0$ for $x' \in \mathbb{R}^{n-1}$, it follows that $u_0$ has the claimed form (up to a rotation). 
\end{proof}

Combining the non-transversal intersection with \cite[Theorem 1.3]{MR3198649}, the following result holds. 

\begin{cor} \label{reg}
Let $u \in P_1^+(0,M, \Omega)$ and $0 \in \overline{\Gamma(u)}$. Suppose that for some $\epsilon_0>0$, $$\delta_r(u,x^0) \ge \epsilon_0$$ for all $r>0$ and $x^0 \in \Gamma \cap B_1^+$. Then there exists $r_0>0$ such that $$\Gamma \cap \overline{B_{r_0}^+}=\{x:x_n=\phi(x')\} \cap \overline{B_{r_0}^+},$$ where $\phi$ is $C^1$. 
\end{cor}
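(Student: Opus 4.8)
The plan is to combine Theorem \ref{tt} (non-transversal intersection) with the density hypothesis and invoke the interior/boundary regularity machinery of \cite{MR3198649}. First I would record that under the uniform thickness assumption $\delta_r(u,x^0)\ge\epsilon_0$, the hypotheses of \cite[Theorem 1.3]{MR3198649} are satisfied at every free boundary point $x^0\in\Gamma\cap B_1^+$ near the origin; that result provides a uniform modulus of continuity for the (normalized) blow-ups and, crucially, directional monotonicity of $u$ in a cone of directions, which upgrades to the statement that in a small half-ball $B_{2r_0}^+$ the free boundary $\Gamma$ is a Lipschitz graph $x_n=\phi(x')$ with a Lipschitz constant controlled by $\epsilon_0$ alone. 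This is the step where all the work of \cite{MR3198649} is black-boxed.

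Next I would use Theorem \ref{tt} to pin down the behaviour at the origin itself. Since $0\in\overline{\Gamma(u)}$ and tangential touch holds, $\Gamma(u)\cap B_{r_0}^+\subset\{x_n\le\omega(|x'|)|x'|\}$; combined with the Lipschitz graph representation, this forces $\phi(x')/|x'|\to 0$ as $x'\to 0$, i.e. the graph is differentiable at $0$ with $\nabla\phi(0)=0$ and tangent plane $\{x_n=0\}$. I would then argue the same at every nearby free boundary point: by the uniform thickness hypothesis and the classification of blow-ups (the blow-up at any $x^0\in\Gamma$ is a half-space solution $\tfrac{\gamma_0}{2}((x-x^0)\cdot\nu)_+^2$ type profile, thanks to the density forcing case (i) of Proposition \ref{ke} or \cite{MR3198649}), the free normal $\nu(x^0)$ is well defined and the blow-up is unique. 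Standard compactness then gives continuity of $x^0\mapsto\nu(x^0)$: if $x^0_k\to x^0$ along $\Gamma$ but $\nu(x^0_k)\not\to\nu(x^0)$, rescaling at $x^0$ produces a blow-up that is simultaneously a half-space solution with two distinct normals, a contradiction.

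Finally, continuity of the unit normal for a Lipschitz graph is equivalent to the graphing function being $C^1$: $\nu(x^0)=(-\nabla\phi(x'),1)/\sqrt{1+|\nabla\phi(x')|^2}$, so $\nu$ continuous on $\Gamma\cap\overline{B_{r_0}^+}$ forces $\nabla\phi$ continuous on the corresponding projection. Shrinking $r_0$ once more if necessary to ensure the graph representation and the normal continuity both hold on $\overline{B_{r_0}^+}$ completes the proof.

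I expect the main obstacle to be the precise extraction of what \cite[Theorem 1.3]{MR3198649} delivers in the present boundary setting — in particular verifying that the thickness functional $\delta_r$ defined here via $MD(\cdot)$ matches the min-diameter condition used there, and that their monotonicity cone argument (originally run in the interior) is compatible with the fixed boundary $B'_1$. Once the Lipschitz graph is in hand, the passage from Lipschitz to $C^1$ via uniqueness and continuity of blow-up normals is routine, and the only additional ingredient is Theorem \ref{tt} to guarantee that the tangent plane at the origin is horizontal rather than merely some fixed hyperplane.
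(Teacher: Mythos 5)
Your two ingredients --- the uniform thickness hypothesis feeding \cite[Theorem 1.3]{MR3198649} and Theorem \ref{tt} --- are exactly the ones the paper's proof uses, and the overall architecture is the same. However, two of your steps have real holes as written. First, \cite[Theorem 1.3]{MR3198649} together with the thickness assumption gives, near each \emph{interior} free boundary point, a $C^1$ graph in a coordinate system adapted to the blow-up normal at that point; the functional $\delta_r$ (a minimal-diameter quantity) is direction-agnostic, so $\epsilon_0$ alone cannot control a Lipschitz graph specifically of the form $x_n=\phi(x')$ on a full half-ball $B_{2r_0}^+$. To orient all the local graphs over $\{x_n=0\}$ you must already know that the free normals are close to $e_n$ near the contact point --- which is precisely what the paper extracts from Theorem \ref{tt}, and which you invoke only afterwards, and only to obtain $\nabla\phi(0)=0$.

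Second, your endgame at the origin does not close. Differentiability of $\phi$ at $0$ with $\nabla\phi(0)=0$ (from the squeeze $\phi(x')\le\omega(|x'|)|x'|$) together with continuity of $\nabla\phi$ away from $0$ does not imply continuity of $\nabla\phi$ at $0$ (compare $t\mapsto t^2\sin(1/t)$). What is needed is $\nu(x^0)\to e_n$ as $x^0\to 0$ along $\Gamma$. Your compactness/uniqueness-of-blow-up argument for continuity of $\nu$ is an interior argument: if you rescale at the origin by $|x^0_k|$, the points $x^0_k/|x^0_k|$ converge (by tangential touch) to a point of the fixed boundary $\{x_n=0\}$, where the interior uniqueness of blow-ups no longer applies. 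The convergence of normals must instead be deduced from the confinement $\Gamma\subset\{x_n\le\omega(|x'|)|x'|\}$ combined with the uniform-scale regularity that thickness provides at scale comparable to $|x^0_k|$: a nearly flat graph piece of size $c|x^0_k|$ through $x^0_k$ with normal bounded away from $e_n$ would leave the tangential region. That is the missing step; once it is supplied, your Lipschitz-plus-continuous-normal route does recover the conclusion, which the paper obtains more directly by quoting the local $C^1$ statement of \cite[Theorem 1.3]{MR3198649} and then using Theorem \ref{tt} to take all the local graphs with respect to the $\{x_n=0\}$ hyperplane.
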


\begin{proof}
Let $x^0 \in \Gamma \cap B_1^+$, then there exists a neighborhood of $x^0$ such that $\Gamma$ is represented as the graph of a $C^1$ function with respect to some coordinate system \cite[Theorem 1.3]{MR3198649}. By Theorem \ref{tt} it follows that the normal to the free boundary converges to $e_n$ so that this function can be taken with respect to the $\{x_n=0\}$ hyperplane. 
\end{proof}

\begin{rem} \label{rema}
If the free boundary can be represented as the graph of a Lipschitz function close to a contact point, then the thickness condition is satisfied. In general, the free boundary is not more regular than $C^{1, \text{Dini}}.$ This is in sharp contradistinction to the interior case.  
\end{rem}

%The following results can now be derived in a standard way. 
%\begin{proof}[proof of Theorem \ref{tt}]
%It suffices to show that for any $\epsilon>0$ there exists $\rho_\epsilon>0$ such that $\Gamma(u) \cap B_{\rho_\epsilon}^+ \subset B_{\rho_\epsilon}^+ \setminus \mathcal{C}_\epsilon$, where $\mathcal{C}_\epsilon=\{x_n > \epsilon |x'| \}$. If not, then there exists $\epsilon>0$ such that for all $k \in \mathbb{N}$  
%\begin{equation*}
%\Gamma(u) \cap B_{1/k}^+ \cap \mathcal{C}_\epsilon \neq \emptyset. 
%\end{equation*}
%Let $u_k(x)=\frac{u(r_kx)}{r_k^2}$ and $x_k \in \Gamma(u) \cap B_{1/k}^+ \cap \mathcal{C}_\epsilon$. Set $r_k=|x_k|$ and consider $y_k=\frac{x_k}{r_k}$ so that $y_k \in \Gamma(u_k)$ and $u_k \rightarrow u_0(x)=ax_1x_n+bx_n^2$ and $y_{k} \rightarrow y \in \partial B_1 \cap C_\epsilon$ (up to a subsequence) which implies $\Gamma(u_0) \neq \emptyset$, a contradiction. 
%\end{proof}

The existence of non-tangential second derivatives follows in a standard way. 

\begin{cor}
Let $u \in P_1^+(0,M, \Omega)$ and $0 \in \overline{\Gamma(u)}$. Then 
$$\displaystyle\lim_{|x|\rightarrow 0}\partial_{ij} u(x)$$ exists non-tangentially for all $i,j \in \{1,\ldots, n\}$.
\end{cor}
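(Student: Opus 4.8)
The plan is to combine the uniqueness of blow-ups (Theorem \ref{cul}) together with a dichotomy governed by whether or not $\nabla u(0) = 0$, reducing the statement to the two possible shapes of the blow-up limit and then transferring convergence of second derivatives to the non-tangential limit. First I would observe that by Theorem \ref{tt} (non-transversal intersection) the free boundary touches the fixed boundary tangentially, so after restricting to a small half-ball $B_{r_0}^+$ one knows the free boundary lies in the region $\{x_n \le \omega(|x'|)|x'|\}$; in particular $0 \in \overline{\Gamma(u)}$ forces $u(0)=0$ and $\nabla u(0)=0$ (the latter because any tangential derivative vanishes along $B_1'$ and the normal derivative must vanish at a free boundary point approached tangentially, by the $C^{1,1}$ regularity and the boundary condition). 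Thus the hypothesis of Theorem \ref{cul} is met and every blow-up limit of $u$ at the origin has the form $u_0(x) = a x_1 x_n + b x_n^2$ (with the half-space case $a = 0$ included).

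Next I would argue that this forces $D^2 u(0)$ to exist, i.e. that the blow-up is unique and linear in the sense that $u(rx)/r^2 \to u_0(x)$ with $u_0$ a fixed quadratic polynomial. Indeed, in the proof of Theorem \ref{cul} via Proposition \ref{ke}, the alternative (ii) yields that $D^2 u(0)$ exists and $u(r_j x)/r_j^2 = \langle x, D^2 u(0) x\rangle + o(1)$, while alternative (i) gives $u_0 = b x_n^2$, which is again a fixed quadratic; in both cases the rescalings converge (along every sequence $r \to 0^+$, by the argument that $N$ and $b$ are independent of the sequence) to one and the same homogeneous degree-two polynomial $P(x) := \langle x, A x\rangle$. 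Equivalently, $u(x) = P(x) + o(|x|^2)$ as $x \to 0$.

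From here the existence of the non-tangential limit of $\partial_{ij} u$ follows by a standard barrier/interior-estimate argument on dyadic balls. Fix a cone $\mathcal{C}$ with vertex at $0$ contained in $B_1^+$ whose opening is strictly inside $\{x_n > 0\}$ (so points of $\mathcal{C}$ of modulus $\rho$ have distance to $\partial B_1'$ comparable to $\rho$); for $x \in \mathcal{C}$ small, consider the rescaling $v_x(y) := \big(u(x + |x| y) - P(x+|x|y)\big)/|x|^2$ on $B_{c}$ for a small fixed $c$, where by the previous step $\|v_x\|_{L^\infty(B_c)} = o(1)$ as $x \to 0$. Since $u$ solves $F(D^2 u) = \chi_\Omega$ and $F(D^2 P)$ is a constant (equal to $1$ in the phase where the blow-up is non-trivial, and one checks $F(A)=1$ in the relevant case), $v_x$ solves a uniformly elliptic equation with bounded ingredients on $B_c$, and interior $C^{1,1}$—indeed interior $W^{2,p}$ and then, away from the free boundary, $C^{2,\alpha}$—estimates give $\|D^2 v_x\|_{L^\infty(B_{c/2})} \to 0$; unraveling the scaling, $D^2 u(x) \to A$ as $x \to 0$ within $\mathcal{C}$. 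Since the cone $\mathcal{C}$ was arbitrary among non-tangential cones, this is exactly the claimed non-tangential limit, and the limit value $A_{ij} = \partial_{ij} P(0)$ is independent of the cone.

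The main obstacle is the last step's handling of the free boundary inside the rescaled balls $B_c$: near a contact point the set $\Omega$ need not fill $B_c$, so $v_x$ is only a $W^{2,p}$ (hence $C^{1,1}$) solution of an obstacle-type problem rather than a genuinely smooth solution, and one must be careful that the $C^{1,1}$ bound on $v_x$ (uniform in $x$, from the class $P_1^+$) combined with $\|v_x\|_\infty \to 0$ still yields $D^2 v_x \to 0$ pointwise along the non-tangential sequence. This is where one invokes the uniqueness of the blow-up quantitatively: $u - P = o(|x|^2)$ together with the uniform $C^{1,1}$ bound and a standard compactness/iteration argument (if $D^2 u(x_k)$ did not converge to $A$ along some non-tangential sequence $x_k \to 0$, extract a blow-up of $u$ at $0$ along $r_k = |x_k|$; it equals $P$, yet its second derivative at the interior point $x_k/|x_k|$ would differ from $A$, contradicting $C_{loc}^{1,\alpha}$—and in the non-degenerate region $C_{loc}^{2,\alpha}$—convergence of the rescalings). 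This contradiction argument is the cleanest route and sidesteps constructing explicit barriers.
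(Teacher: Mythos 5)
Your proposal is correct and, in its final ``cleanest route'' paragraph, is essentially the paper's own proof: rescale along $r_k=|x_k|$, use non-tangentiality (via Theorem \ref{tt}, or the non-vanishing of $\nabla u_0$ at non-tangential points) to place a fixed ball around $y=\lim x_k/|x_k|$ inside $\Omega_k$, upgrade to $C^{2,\alpha}$ convergence there, and conclude $D^2u(x_k)\to D^2u_0(y)$, which is a constant matrix independent of the sequence by the uniqueness of the blow-up (Theorem \ref{cul}/Proposition \ref{ke}). The intermediate detour through $u=P+o(|x|^2)$ and the rescalings $v_x$ with interior estimates is not needed and is subsumed by that compactness argument.
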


\begin{proof}
Suppose $\{x^j\}$ is such that $x_n^j \ge \kappa |(x')^j|$ for some $\kappa>0$. Then, for all $j$ large, $x^j \in \Omega$. Let $$u_j(x)=\frac{u(r_jx)}{r_j^2}$$ where $r_j=|x^j|$ and $y^j=\frac{x^j}{r_j}$ so that along a subsequence, $y^j \rightarrow y \in \mathbb{S}^{n-1}$. There exists $\mu>0$ such that $B_\mu(y) \subset \Omega_j$ for all $j$ large so that $u_j \in C^{2,\alpha}(B_\mu(y))$. Therefore, $$D^2 u_j(y^j)=D^2 u(x^j) \rightarrow D^2 u_0(y).$$ Since $D^2 u_0$ is a constant matrix, it is independent of $y$, and therefore independent of the subsequence. 
\end{proof}

\begin{rem}
In general, one cannot expect the tangential second derivatives to match the non-tangential derivatives. Consider the case when $0 \in \overline{\Gamma_i(u)}$ so that there exists a collection of balls $\{B_\alpha\}$ where $B_\alpha \subset int\{u=0\}$ and on $B_\alpha$, $D^2 u =0$. 
\end{rem}

\section{Regularity} \label{regularity}
Corollary \ref{reg} follows in a standard way once non-transversal intersection is established (via interior regularity). In the physical case when $u \ge 0$, it turns out that one may dispense with density conditions. The key is to exploit the boundary condition and estimate a maximal mixed partial derivative. 
 
\begin{lem} \label{rl}
Suppose $u \in P_1^+(0,M, \Omega)$ and  $u\ge 0$. Then for any $\epsilon>0$ there exists $r(\epsilon, M)>0$ such that if $x^0 \in \Gamma(u) \cap B_{1/2}^+$ and $d=x_n^0<r,$ then 
$$ 
\sup_{B_{2d}^+(x^0)} |u-h| \le \epsilon d^2, \hskip .2in \sup_{B_{2d}^+(x^0)} | \nabla u - \nabla h| \le \epsilon d, 
$$
where $$h(x)= b[(x_n-d)^+]^2,$$ and $b>0$ depends on the ellipticity constants of $F$.
\end{lem}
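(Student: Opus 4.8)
The plan is to argue by contradiction and compactness, exactly in the spirit of the blow-up arguments used earlier in the paper. Suppose the statement fails for some $\eps > 0$: then there is a sequence $u_j \in P_1^+(0,M,\Omega_j)$ with $u_j \ge 0$, free boundary points $x^j \in \Gamma(u_j) \cap B_{1/2}^+$ with $d_j := x_n^j \to 0$, such that the rescalings $v_j(x) := u_j(x^j + d_j x)/d_j^2$ fail the desired closeness to the model $h$ at unit scale. The normalization is chosen so that $v_j \ge 0$, $v_j$ solves $F(D^2 v_j) = \chi_{\Omega_j'}$ in a half-ball whose radius grows like $1/d_j$ (here $\Omega_j'$ is the rescaled coincidence complement), and $0 \in \Gamma(v_j)$; moreover $\{x_n = 0\}$ rescales to the hyperplane $\{x_n = -1\}$, on which $v_j = 0$. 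By the uniform $C^{1,1}$ bound for the class $P_r^+(0,M,\Omega)$ (from \cite{MR3513142}) together with non-degeneracy, a subsequence of $v_j$ converges in $C^{1,\alpha}_{\loc}$ to a global limit $v_0 \ge 0$ which is a $C^{1,1}$ viscosity solution of $F(D^2 v_0) = \chi_{\Omega_0}$ in $\{x_n > -1\}$, vanishes on $\{x_n = -1\}$, has $0$ as a free boundary point, and satisfies $\nabla v_0(0) = 0$ (free boundary regularity of the limit at the origin, since the $C^{1,1}$ bound passes to the limit and $v_0 \ge 0$ forces the gradient to vanish there).

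The heart of the matter is then to identify $v_0$. First I would observe that, because $v_0 \ge 0$ and $v_0 = 0$ on $\{x_n = -1\}$, one has $\partial_{e_n} v_0 \ge 0$ on that hyperplane, so by the monotonicity Lemma \ref{m} (in the form noted in the remark following it, valid for all directions $e$ with $e \cdot e_n \ge 0$ when $u \ge 0$) applied to the rescaled functions, every tangential second derivative of $v_0$ is controlled; combined with $\nabla v_0(0) = 0$ and the vanishing on $\{x_n = -1\}$, this should force $v_0$ to depend only on $x_n$. Indeed, the key mechanism is the estimate of the \textbf{maximal mixed partial derivative} flagged in the section introduction: define $N_j := \sup_{e \in \mathbb{S}^{n-1}, e\cdot e_n \ge 0} \sup \partial_e v_j / (x_n + 1)$ over the relevant half-ball, show $N_j \to 0$ using that $\nabla u_j(x^j) = 0$ at the free boundary point together with the $C^{1,1}$ bound (this is the analogue of $N = 0$ in the proof of Proposition \ref{th1}), and conclude $\partial_{x_i} v_0 \equiv 0$ for $i < n$. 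Then $v_0 = v_0(x_n)$ is a one-dimensional solution: on $\{v_0 > 0\}$ it solves $F(D^2 v_0) = 1$, i.e. $F(v_0'' e_n \otimes e_n) = 1$, a constant, so $v_0'' = b$ with $b > 0$ determined by the ellipticity of $F$ (more precisely $b$ is the unique positive number with $F(b \, e_n\otimes e_n) = 1$, which lies between $1/(2\lambda_1)$ and $1/(2\lambda_0)$); the free boundary at $0$, nonnegativity, the Dirichlet condition at $x_n = -1$, and non-degeneracy then pin down $v_0(x) = b[(x_n)^+]^2$, i.e. exactly the rescaled model $h$. This contradicts the assumed failure of $C^1$-closeness at unit scale, and undoing the rescaling gives the two displayed estimates with $h(x) = b[(x_n - d)^+]^2$.

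The main obstacle I anticipate is the step showing $N_j \to 0$, i.e. that the maximal mixed/tangential slope of the rescalings tends to zero: one must leverage $\nabla u_j(x^j) = 0$ at the contact-adjacent free boundary point and the boundary condition on $\{x_n = 0\}$ to rule out a nonzero limiting tangential gradient. If $N_j$ did not vanish, a secondary blow-up (as in Proposition \ref{th1}) would produce a limit with $\partial_{x_1} w_0 = N y_n$ and hence $w_0 = a x_1 x_n + b x_n^2$ with $a \neq 0$; but this must be reconciled with $w_0 \ge 0$ on $\{x_n \ge 0\}$ near the relevant point, which is impossible since $a x_1 x_n$ changes sign — giving the contradiction. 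A related technical point is ensuring the limit $v_0$ inherits the free boundary condition and the equation across the whole growing half-ball; this is routine given the uniform estimates, but care is needed with the coincidence sets $\Omega_j'$ and the possibility $0 \in \overline{\Gamma_i}$, which is precisely where nonnegativity is used to exclude the $a x_1 x_n + b x_n^2$ phase (as remarked in the introduction). Once $v_0 = b[(x_n)^+]^2$ is established, the passage back to the unnormalized estimates is immediate.
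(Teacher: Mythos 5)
Your proposal follows essentially the same route as the paper's proof: argue by contradiction, rescale at the scale $d_j=x_n^j$, pass to a global limit by the uniform $C^{1,1}$ estimate, and identify the limit as the half-space profile by showing that the maximal tangential slope (tangential derivative divided by distance to the fixed boundary) vanishes, the nonzero case being excluded by a secondary blow-up at points of the fixed boundary, which produces a global solution of the form $ax_1x_n+cx_n+\tilde b x_n^2$ with $a\neq 0$ and contradicts $u\ge 0$. The paper rescales about $(x^j)'$ so that the fixed boundary stays at $\{x_n=0\}$ and the free boundary point sits at $e_n$; your translation to $\{x_n=-1\}$ is immaterial. Two points in your write-up need repair, though neither changes the approach: (i) your quantity $N_j$ must be restricted to tangential directions $e\in\mathbb{S}^{n-2}\cap e_n^{\perp}$ --- if $e=e_n$ is allowed, $N_j\to 0$ is false, since already for the limiting model the normal derivative is comparable to the distance to the fixed boundary (and Lemma \ref{m} plays no role here); and (ii) $N_j\to 0$ does not follow from $\nabla u_j(x^j)=0$ together with the $C^{1,1}$ bound, which only give boundedness of the ratio; the correct mechanism is the one you flag at the end, and to run it one should, as the paper does, define a single class-wide quantity $N=\limsup_{|x|\to 0,\,x_n>0}\sup_{u}\sup_{e\perp e_n}\sup_{y\in\overline{B_{1/2}^+}\cap\{x_n=0\}}\partial_e u(x+y)/x_n$ over all non-negative members of $P_1^+(0,M,\Omega)$, so that the inequality $\partial_{x_1}w_0\le N x_n$ is inherited by the secondary blow-up limit at every point and the strong-maximum-principle classification from \cite{MR3513142} yields $a\neq 0$, whence the sign contradiction. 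With these adjustments your outline coincides with the paper's argument.
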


\begin{proof}
Suppose not, then there exists $\epsilon>0$, non-negative $u_j \in P_1^+(0,M, \Omega)$, and $x^j \in \Gamma(u_j) \cap B_{1/2}^+$ with $d_j=x_n^j \rightarrow 0$, for which 
$$\sup_{B_{2d_j}(x^j)^+} |u_j- b[(x_n-d_j)^+]^2|>\epsilon d_j^2,$$ or 
$$\sup_{B_{2d_j}(x^j)^+} |\nabla u_j- 2b(x_n-d_j)^+|>\epsilon d_j.$$   
Let $\tilde u_j(x)=\frac{u_j((x^j)'+d_jx)}{d_j^2}$ so that in particular 
$$||\tilde u_j - h||_{C^1(B_2^+(e_n))} \ge \epsilon,$$ where $h(x)=b[(x_n-1)^+]^2$. Since $\tilde u_j(e_n)=|\nabla \tilde u_j(e_n)|=0,$ the $C^{1,1}$ regularity of $\tilde u_j$ implies that $|\tilde u_j(x)| \le C|x-e_n|^2$. By passing to a subsequence, if necessary, $$\tilde u_j \rightarrow u_0$$ where $u_0 \in C^{1,1}(\mathbb{R}_+^n)$ satisfies the following PDE in the viscosity sense
\begin{equation} \label{m3}
\begin{cases}
F(D^{2}u_0)=1 & \text{a.e. in }\mathbb{R}_+^n\cap\Omega_0,\\
|\nabla u_0|=0=u_0 & \text{in }\mathbb{R}_+^n\backslash\Omega_0,\\
u_0=0 & \text{on }\mathbb{R}_+^{n-1}.
\end{cases}
\end{equation}   
Now let $$N=\limsup_{|x|\rightarrow 0, x_n>0} \frac{1}{x_n} \sup_{u\in P_1^+\cap \{u \ge 0\}} \sup_{e \in \mathbb{S}^{n-2} \cap e_n^{\perp}} \sup_{y \in \overline{B_{1/2}^+} \cap \{x_n=0\}} \partial_e u(x+y)$$ and note that $N<\infty$ by $C^{1,1}$ regularity and the boundary condition: for any 
$e \in \mathbb{S}^{n-2} \cap e_n^{\perp}$ and $y \in \overline{B_{1/2}^+} \cap \{x_n=0\}$, it follows that $\partial_{e} u(x'+y)=0$. Furthermore, 

\begin{equation} \label{part}
N \ge \lim_j \bigg| \frac{\partial_{x_i}  u_j(d_j x+(x^j)')}{d_j x_n}\bigg| = \lim_j \bigg| \frac{\partial_{x_i} \tilde u_j(x)}{x_n} \bigg| =\bigg|\frac{\partial_{x_i} u_0(x)}{x_n}\bigg|
\end{equation}
for all $i \in \{1,\ldots, n-1\}$. In particular, let $v= \partial_{x_1} u_0$ so that in $\mathbb{R}_+^n$, 
\begin{equation} \label{ine}
|v(x)| \le Nx_n. 
\end{equation}
If $N=0$, then $\partial_{x_i} u_0=0$ for all $i \in \{1,\ldots,n-1\}$ and therefore $u_0(x)=u_0(x_n)$. Since $e_n$ is a free boundary point, it follows that $u_0=h$, a contradiction. Thus $N>0$  
and there is a sequence 
$\{x^k\}_{k \in \mathbb{N}}$ with $x_n^k>0$, $u_k \in P_1^+(0,M, \Omega)$, $u_k \ge 0$, $y^k \in \overline{B_{1/2}^+} \cap \{x_n=0\}$, and $e^k \in \mathbb{S}^{n-2} \cap e_n^{\perp}$ such that $$N=\lim_{k \rightarrow \infty} \frac{1}{x_n^k}  \partial_{e^{k}} u_k(x^k+y^k).$$ 
By compactness, $e^k \rightarrow e_1 \in \mathbb{S}^{n-2}$ (along a subsequence) so that up to a rotation, 
$$N= \lim_{k \rightarrow \infty} \frac{1}{x_n^k}  \partial_{x_1} u_k(x^k+y^k).$$ Let 
$$\tilde u_k(x)=\frac{u_k(y^k+r_kx)}{r_k^2},$$ where $r_k=|x^k|$, $z^k=\frac{x^k}{r_k}$, and note that along a subsequence $z^k \rightarrow z \in \mathbb{S}^{n-1}$ and $\tilde u_k \rightarrow u_0$. It follows that $\partial_{x_1}u_0(z)=Nz_n$ and proceeding as in \cite{MR3513142} one deduces that $u_0(x)=ax_1x_n+cx_n+\tilde b x_n^2$ for $a \neq 0$ and $c, \tilde b \in \mathbb{R}$, contradicting that $u \ge 0$.   
\end{proof}

\begin{proof}[proof of Theorem \ref{c1r}]
By Lemma \ref{rl} it follows that in a neighborhood of the origin, there is a cone of fixed opening that can be placed below and above each free boundary point. This implies that the free boundary is Lipschitz continuous. Away from the origin, it is therefore $C^1$ by interior results. Moreover, since the intersection of $\Gamma$ and the origin occurs non-transversally, the aperture of the cones can be taken arbitrarily close to $\pi$, and this implies that the free boundary is $C^1$ at the origin.  
\end{proof}

\bibliographystyle{alpha}

\bibliography{ngonref2}

\signei

\end{document}